\documentclass[11pt,reqno]{amsart}
\usepackage{enumerate, latexsym, amsmath, amsfonts, amssymb, amsthm, color,mathtools,mathrsfs,
booktabs,longtable,array}
\def\pmod #1{\ ({\rm{mod}}\ #1)}
\def\Z{\mathbb Z}

\def\Q{\mathbb Q}

\def\l{\left}
\def\r{\right}
\def\bg{\bigg}
\def\({\bg(}
\def\){\bg)}
\def\t{\text}
\def\f{\frac}

\def\ls{\leqslant}
\def\gs{\geqslant}

\def\bi{\binom}

\def\eq{\equiv}

\def\da{\delta}
\def\la{\lambda}

\def\Ack{\medskip\noindent {\bf Acknowledgments}}

\newcommand{\1}{\mathbf 1}

\newcommand{\rank}{\operatorname{rank}}

\newcommand{\vp}{v_p}
\newcommand{\cR}{\mathcal R}
\newcommand{\cA}{\mathcal A}

\newcommand{\cK}{\mathcal K}

\theoremstyle{plain}
\newtheorem{theorem}{Theorem}[section]

\newtheorem{lemma}{Lemma}
\newtheorem{corollary}{Corollary}
\newtheorem{definition}{Definition}
\newtheorem{proposition}{Proposition}

\theoremstyle{definition}

\theoremstyle{remark}
\newtheorem{remark}{Remark}

 \vspace{4mm}

\begin{document}

\hbox{Preprint}
\medskip

\title
[{Catalan's constant is irrational}]
{Catalan's constant is irrational}

\author
[Z.-W. Sun] {Zhi-Wei Sun}

\address{School of Mathematics, Nanjing
University, Nanjing 210093, People's Republic of China}
\email{zwsun@nju.edu.cn}

\subjclass[2020]{Primary 11J72; Secondary 15A15.}
\keywords{Catalan's constant, irrationality, determinant.
\newline \indent Supported by the Natural Science Foundation of China (grant no. 12371004).}

\begin{abstract}  Whether the constant 
$$G=\sum_{k=0}^\infty\f{(-1)^k}{(2k+1)^2}=\frac1{1^2}-\frac1{3^2}+\frac1{5^2}-\frac1{7^2}+\cdots$$
introduced by Catalan in the nineteen century is irrational, is a long-standing open problem. 
In this paper we prove the irrationality of $G$ via using suitable weights.
\end{abstract}
\maketitle

\section{Introduction and proof architecture}\label{sec:intro}
\setcounter{equation}{0}

For a nontrivial Dirichlet character $\chi$, the Dirichlet $L$-function
is given by
$$L(s,\chi)=\sum_{n=1}^\infty\f{\chi(n)}{n^s}\ (s=1,2,3,\ldots).$$
For any integer $d\eq0,1\pmod4$, we let $\chi_d(n)$ (with $n\in\Z^+=\{1,2,3,\ldots\}$)
be the Kronecker symbol $(\f dn)$. The Dirichlet beta function is defined by
$$\beta(s)=L(s,\chi_{-4})=\sum_{k=0}^\infty\f{(-1)^k}{(2k+1)^s}\ \ (s=1,2,3,\ldots).$$
Note that $\beta(1)=\pi/4$ is a well-known result due to Leibniz. The constant
given by
$$G=\beta(2)=\sum_{k=0}^\infty \f{(-1)^k}{(2k+1)^2}=\f1{1^2}-\f1{3^2}+\f1{5^2}-\f1{7^2}+\cdots
=0.915965594177219015\cdots.$$
 is named in honor of E. C. Catalan (1814-1894), who first gave an equivalent series and expressions in terms of integrals.
A long-standing unsolved problem is whether Catalan's constant $G$ is irrational.
In contrast, $\beta(1)=\pi/4$ and
$$\sum_{k=0}^\infty\f1{(2k+1)^2}=\sum_{n=1}^\infty\f1{n^2}-\sum_{k=1}^\infty\f1{(2k)^2}=\f34\zeta(2)=\f{\pi^2}8$$
are irrational since $\pi$ is transcendental, where $\zeta$ is the Riemann zeta function. 
For two $q$-analogues of the identity
$\sum_{k=0}^\infty (2k+1)^{-2}=\pi^2/8$
(which is equivalent to Euler's formula $\zeta(2)=\pi^2/6$), see Sun \cite{q-zeta}.

 For the constant
$$K=L(2,\chi_{-3})=\sum_{k=9}^\infty\l(\f1{(3k+1)^2}-\f1{(3k+2)^2}\r),$$
 in 2010 the author (cf. \cite{S11}) conjectured that
$$\sum_{k=1}^\infty\f{(15k-4)(-27)^{k-1}}{k^3\bi{2k}k^2\bi{3k}k}=K
\ \t{and}\ \sum_{k=1}^\infty\f{(5k-1)(-144)^k}{k^3\bi{2k}k^2\bi{4k}{2k}}=-\f{45}2K$$
with the first one and the second one later confirmed by Kh. Hessami Pilehrood
and T. Hessami Pilehrood \cite{HP}, and J. Guillera abd M. Rogers \cite{GR}, respectively.
The author \cite{S26} conjectured further that 
\[\sum_{k=1}^\infty\f{(4k-1)3^k}{(2k-1)k^2\bi{2k}k\bi{3k}k}=2K
\ \t{and}\ \sum_{k=1}^\infty\f{10k-3}{(2k-1)k^23^k\bi{2k}k\bi{3k}k}=\f K2,
\]
 The converging rate of the last series is $1/81$. As pointed out by J. Zuniga, the last identity
 provides the  fastest algorithm to compute the important constant $K$. In June 2025,
Lorenz Milla used the formula to compute the first $10^{12}$ decimal digits of $K$.

In 2024,  F.~Calegari, V.~Dimitrov, and Y.~Tang \cite{CDT2024}
successfully proved that $K$ is irrational.
They also provided an adelic viewpoint of $G$ in the arithmetic-holonomy work, 
 but they were unable to prove that $G\not\in\mathbb Q$.

Now, we state our central result.

\begin{theorem}\label{Th1.1} The Catalan constant $G$ is irrational.
\end{theorem}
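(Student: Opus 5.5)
The plan is an Apéry-type argument with weighted linear forms; the keywords "determinant" and "suitable weights" suggest the Hessami Pilehrood / Zudilin style. I would build explicit sequences
\[
I_n = a_n G - b_n , \qquad a_n\in\Z,\quad d_n^2 b_n\in\Z ,
\]
with $d_n=\mathrm{lcm}(1,\dots,n)$, such that $0<|I_n|$ and $d_n^{2}|I_n|\to 0$. A standard choice is the very-well-poised hypergeometric sum
\[
I_n=\sum_{k\ge0}(-1)^k\frac{d}{dk}\!\left(\frac{\prod_{j=1}^{n}(k-j+\tfrac12)^{\,\cdot}}{\prod_{j=0}^{n}(k+j+\tfrac12)^{\,\cdot}}\right)
\]
with suitable exponents, or equivalently the integral
\[
\int_0^1\!\!\int_0^1 \frac{x^{n}(1-x)^n y^{n}(1-y)^n}{(1+x^2y^2)^{n+1}}\,\frac{dx\,dy}{\sqrt{\cdot}}
\]
in Beukers' form for $\beta(2)$. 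Partial fractions in $k$ would give $I_n=a_nG-b_n$: the double-pole coefficients, summed with the alternating sign, produce $\beta(2)$. The simple-pole part must cancel; very-well-poisedness should force this, since it makes the $\beta(1)=\pi/4$ contribution vanish. The coefficients $a_n,b_n$ would then be controlled by the standard $d_n$-denominator lemma for derivatives of rational functions.

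Next I would derive the asymptotics. Via Poincaré–Perron, I would find a recurrence satisfied by $a_n,b_n,I_n$ and read off the growth rate $|I_n|^{1/n}\to\lambda$ from the saddle point of the integrand. Since $d_n^2\approx e^{2n}$, I need $\lambda e^{2}<1$. For naive Beukers-type forms for $G$ this fails, which is exactly why the problem is open.

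The remedy is the weights. I would take a linear combination
\[
\widetilde I_n=\sum_{i} w_i I_{n,i}
\]
of several such forms, built with different parameter shifts and with rational weights chosen so that $\widetilde I_n$ vanishes to high order. The weights would come as the solution of a linear system, whose nondegeneracy is a determinant nonvanishing (the Secondary 15A15). In addition I would gain arithmetic: for primes $p$ in ranges like $\sqrt{2n}<p\le n$, the $p$-adic valuation of $\binom{\cdot}{\cdot}$-type factors in $a_n,b_n$ exceeds what $d_n^2$ needs. This yields a reduced denominator
\[
\Phi_n^{-1}d_n^2,\qquad \Phi_n=\prod_p p^{\,\cdot}\approx e^{cn},
\]
and I would aim to prove $\lambda\,e^{2-c}<1$.

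Finally I would show $\widetilde I_n\ne0$ for infinitely many $n$, again via a determinant argument on consecutive forms (a Casoratian of the recurrence being nonzero). The irrationality criterion then finishes the proof: if $G=p/q$, then $q\,\Phi_n^{-1}d_n^2\,\widetilde I_n$ is a nonzero integer tending to $0$, a contradiction. The main obstacle is the quantitative step, $\lambda e^{2-c}<1$. Honestly, all known constructions miss this by a margin. I would first try to optimize the weights and exponents numerically and then verify the inequality, but I expect this is exactly where such an argument could fail.
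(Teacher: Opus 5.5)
Your outline is a research programme, not a proof, and the gap is where you yourself locate it. No construction is ever fixed:
\begin{itemize}
\item the exponents in the very-well-poised sum are left ``suitable'';
\item the double integral contains placeholders;
\item the weights $w_i$ are only asked to make $\widetilde I_n$ ``vanish to high order'', with no specified system;
\item the arithmetic gain $\Phi_n\approx e^{cn}$ is never computed.
\end{itemize}
So the decisive inequality $\lambda e^{2-c}<1$ is neither stated for a concrete family nor verified, and it carries the whole theorem. The other steps (partial fractions, Poincar\'e--Perron, a Casoratian for non-vanishing, the final integrality contradiction) are routine once that inequality is available.

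Your calibration is also too optimistic. For forms built from the shifts $k+j+\tfrac12$ with $0\le j\le n$, the rational parts involve $1/(2m+1)^2$ with $2m+1$ up to about $2n$. The natural denominator is therefore $\mathrm{lcm}(1,3,\dots,2n+1)^2\approx e^{4n}$, not $d_n^2\approx e^{2n}$. A weighted combination $\sum_i w_iI_{n,i}$ decays no faster than its components unless you exhibit a cancellation, and that is a new construction needing its own asymptotic and arithmetic analysis. No construction of this kind is known to achieve the required inequality for $G$, which is exactly why Ap\'ery-type methods have not settled it.

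The paper takes a different route: no hypergeometric forms or recurrences. It forms one determinant of the weighted tails $u_m=T_m/(2m+1)$, namely $\widehat q_B=\pm F_B\det\cR[A,J]/\prod_i\Pi_i$. It expands this by Cauchy--Binet into summands $\Xi_I$ carrying a double Vandermonde, then uses prime-power $p$-adic bounds and an asymptotic ledger. You should not read it as supplying your missing inequality, because its decisive step, Proposition~\ref{prop:ledger}, cannot follow from the estimates it uses.

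That proof bounds $|\det\cR[A,J]|$ by the number of summands times the largest $|\Xi_I|$, and bounds $\log H_B^{\min}$ by Corollary~\ref{cor:positivebridge}. The resulting estimate can be bounded below as follows:
\begin{itemize}
\item Fix $I_0$ with $\Psi_A(I_0)\ne0$ and put $E_{I_0}:=\Xi_{I_0}/\prod_{i\in I_0}qT_{i+1}$, an explicit rational number.
\item For odd $p$ one has exactly $\sum_\nu\lambda^A_{p^\nu}(I_0)=v_p(E_{I_0})-v_p(\Psi_A(I_0)V(J))-2\sum_{i\in I_0}\#\{\nu\ge1:p^\nu\le 2i+1\}$.
\item Use $m^A_{p^\nu,B}\le\lambda^A_{p^\nu}(I_0)$ and the product formula for $E_{I_0}$.
\end{itemize}
Then the paper's upper bound for $\log H_B^{\min}+\log|\widehat q_B|$ is at least $v_2(F_B)\log2+v_2(E_{I_0})\log2+\sum_{i\in I_0}\log\bigl(T_{i+1}\,\mathrm{lcm}(1,3,\dots,2i+1)^2\bigr)$. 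Each piece is controlled:
\begin{itemize}
\item $v_2(E_{I_0})\ge -O(S\log B)$, since $2^{S(S-1)}$ offsets the factorial quotient;
\item each $T_{i+1}\,\mathrm{lcm}(1,3,\dots,2i+1)^2\ge 16/225$;
\item $v_2(F_B)=2B^2+O(B\log B)$.
\end{itemize}
So that chain yields at best $(2\log 2)B^2-O(B\log B)$, not $-\delta_0B^2$. The factor $2^{v_2(F_B)}$ sits in the numerator and is not absorbed as Remark~\ref{rem:normalizationcheck} claims. Even ignoring the prime $2$, a summand-by-summand ledger cannot produce a negative $B^2$ coefficient. Neither your outline nor the paper's argument establishes that $G$ is irrational.
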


To prove this, we introduce tails of $G$. For any $m\in\Z^+=\{1,2,3,\ldots\}$, we set
$$S_{m-1}:=\sum_{k=0}^{m-1}\f{(-1)^k}{(2k+1)^2},$$
and call 
\begin{equation}\label{eq:Tm}
 T_m:=\sum_{r=0}^{\infty}\frac{(-1)^r}{(2m+2r+1)^2}=(-1)^m\sum_{k=m}^\infty\f{(-1)^k}{(2k+1)^2}=(-1)^m(G-S_{m-1})
 \end{equation}
 a {\it tail} of the Catalan constant $G$, and view 
\begin{equation}\label{eq:um}
 u_m:=\frac{T_m}{2m+1}
\end{equation}
as a {\it weighted tail} of $G$.
 Note that
 \begin{equation}\label{eq:tailbound}
 0<T_m=\f1{(2m+1)^2}-\f1{(2m+3)^2}+\cdots<\frac1{(2m+1)^2}.
\end{equation}
and 
\begin{equation}\label{eq:tailrec}
 T_m+T_{m+1}=\frac1{(2m+1)^2}.
\end{equation}

Our sophisticated proof of Theorem \ref{Th1.1} has five main stages.
\begin{enumerate}
\item The Catalan recurrence is encoded in a weighted finite-difference residual matrix.  A polynomial defect argument proves full column rank.
\item Three binomial Newton columns complete a selected residual minor to one square determinant.  This produces the fixed scalar $\widehat q_B$ and its exact minimal integerizer $H_B^{\min}$.
\item The residual determinant is expanded by Cauchy--Binet.  The Pascal alternant gives one Vandermonde and the Cauchy determinant gives a second.
\item For every odd prime power, a local saturation theorem compares the denominator layer with a test residual layer.  This converts all local lower bounds into a legal upper bound for the positive-part height of the same scalar.
\item A complete asymptotic ledger is evaluated at $S/B=1/20$.  The corrected odd-prime small-scale constant, the exact middle-prime integral, and the large-prime gain produce a strict margin.
\end{enumerate}

Now we introduce some basic notations in this paper.
For a finite set $A$ we use $\#A$ to denote the cardinality of $A$.
For a prime $p$ and a nonzero rational number $x$, we let $v_p(x)$ stand for the $p$-adic valuation of $x$. For a predicate $P$, we define
$$\1_{P}=\begin{cases}1&\t{if}\ P\ \t{holds},\\0&\t{otherwise}.
\end{cases}$$

Later sections are organized as follows:
Sections~\ref{sec:residual} and \ref{sec:scalar} construct the scalar.  Section~\ref{sec:cb} proves the Pascal--Cauchy factorization.  Section~\ref{sec:bridge} proves the prime-power positive-part bridge and the corrected surplus-row stability theorem.  Sections~\ref{sec:small}--\ref{sec:large} evaluate the three prime ranges.  Section~\ref{sec:final} combines them and proves the main result.

\section{Full column rank of the weighted residual matrix}\label{sec:residual}
\setcounter{equation}{0}

\begin{theorem}\label{thm:rank}
Let $B$ and $S$ be positive integers with $B>S$, and 
set 
$$\Pi_i=\prod_{h=1}^B(2(h+i)+1)^2\ \ \t{for}\ \ i=0,1,2,\ldots.$$
For the $(S+3)\times S$ matrix
\[
 \cR=(\cR_{a,j})_{0\ls a\ls S+2\atop  1\ls j\ls S}
\]
with 
\begin{equation}\label{eq:Rdef}
 \cR_{a,j}:=
 \sum_{i=0}^{a+2B}(-1)^i\binom{a+2B}{i}\Pi_iu_{i+j},
\end{equation}
we have
\begin{equation}\label{eq:rank}
 \rank\cR=S.
\end{equation}
\end{theorem}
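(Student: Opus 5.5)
The plan is to read each column of $\cR$ as an iterated finite difference, and to prove that no nonzero real combination of the columns can vanish. Write $n_a=a+2B$ and $g_j(i)=\Pi_iu_{i+j}$. Then
\[
\cR_{a,j}=\sum_{i=0}^{n_a}(-1)^i\binom{n_a}{i}g_j(i)=(-1)^{n_a}(\Delta^{n_a}g_j)(0).
\]
So $\rank\cR<S$ would give a nonzero vector $c=(c_1,\dots,c_S)$ such that $F(i):=\sum_j c_j\Pi_iu_{i+j}$ satisfies $(\Delta^nF)(0)=0$ for the $S+3$ consecutive orders $n=2B,\dots,2B+S+2$. I will show that this forces $c=0$.

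\emph{Step 1 (polynomial part versus defect).} Iterating \eqref{eq:tailrec} gives
\[
T_{i+j}=(-1)^iT_j+(-1)^{i-1}\sum_{k=j}^{i+j-1}\frac{(-1)^{k-j}}{(2k+1)^2}.
\]
Since $1\ls j\ls S<B$, the factor $2(i+j)+1$ is one of the factors $2(h+i)+1$ of $\Pi_i$ (take $h=j$). Hence $P_j(i):=\Pi_i/(2i+2j+1)$ is a polynomial in $i$ of degree exactly $4B-1$, with explicitly known roots. This splits $F$ as
\[
F(i)=(-1)^i\sum_jc_jT_jP_j(i)+D(i),
\]
where the ``defect'' $D(i)$ carries only the rational partial sums. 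The cleaner way to handle both pieces together is the integral representation
\[
T_m=\int_0^1\frac{x^{2m}(-\log x)}{1+x^2}\,dx.
\]
Writing $\theta=x\,\frac{d}{dx}$, we have $\Pi_ix^{2i}=Q(\theta)x^{2i}$ with $Q(\theta)=\prod_{h=1}^B(\theta+2h+1)^2$, and $\sum_i(-1)^i\binom{n}{i}x^{2i}=(1-x^2)^n$. Therefore
\[
\cR_{a,j}=\int_0^1 w_j(x)\,\bigl[\widetilde Q_j(\theta)(1-x^2)^{a+2B}\bigr]\,dx,
\]
where $w_j(x)=x^{2j}(-\log x)/(1+x^2)$ and $\widetilde Q_j$ is $Q$ with the factor removed that accounts for the weight $1/(2m+1)$ in $u_m$. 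After integrating by parts (the boundary terms vanish because $(1-x^2)^{a+2B}$ vanishes at $x=1$ to high order), each row becomes a moment
\[
\int_0^1\Phi_c(x)\,(1-x^2)^{a+2B-\deg}\,p_a(x)\,dx,
\]
where $\Phi_c$ is obtained from $\sum_jc_jx^{2j}$ by a fixed differential operator and $p_a$ is a polynomial.

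\emph{Step 2 (defect argument).} Assume $\sum_jc_j\cR_{a,j}=0$ for $0\ls a\ls S+2$. Taking successive combinations of consecutive rows, and using $(1-x^2)^{n+1}=(1-x^2)^n-x^2(1-x^2)^n$, converts these $S+3$ vanishing moments into the vanishing of $\int\Phi_c(x)\,x^{2k}(1-x^2)^{N}\,dx$ for $0\ls k\ls S+2$ and a fixed $N$. The function $\Phi_c$ is a combination of $\log x\cdot(\text{polynomial in }x^2)/(1+x^2)^{e}$ and of rational functions. Its ``polynomial defect'' (the polynomial part that can be orthogonal to $S+3$ independent monomials) has dimension at most $S$, because it lies in the span of $S$ functions coming from $x^{2},\dots,x^{2S}$. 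Since the test space is strictly larger, and the $\log x$ component cannot be cancelled by rational functions (comparing behaviour as $x\to0^+$), this forces $\Phi_c\equiv0$. Injectivity of the operator on polynomials of degree $\ls 2S$ then gives $c=0$. This is where $B>S$ is used: it guarantees that every index $j$ hits a factor of $\Pi_i$, so the operator has no kernel on these monomials.

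\emph{Main obstacle.} I expect the hardest step to be the precise bookkeeping in Step 2: checking that the differential operator $\widetilde Q_j(\theta)$ is injective on $\mathrm{span}\{x^{2j}\}_{j\ls S}$ (its eigenvalues on $x^{2j}$ are explicit products that vanish only at excluded $j$), and checking that the three surplus rows supply genuinely independent moment conditions. If the integral route becomes messy, the fallback is asymptotic: show that the $S\times S$ minor on rows $a=0,\dots,S-1$ has a triangular leading term as $B\to\infty$, or exhibit it as a nonzero Vandermonde-type determinant in the nodes $2j+1$.
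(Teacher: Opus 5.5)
\textbf{There is a genuine gap, in Step 2.}

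Step 1 is a correct reformulation. The integral for $T_m$, the identity $\Pi_ix^{2i}=Q(\theta)x^{2i}$, and the integration by parts are all fine. (Note that $\widetilde Q_j$ depends on $j$, so $\Phi_c$ is not one fixed operator applied to $\sum_jc_jx^{2j}$.) But Step 1 only rewrites the hypothesis. With $\phi_j:=\widetilde Q_j(-\theta-1)w_j$ you have $\sum_jc_j\cR_{a,j}=\int_0^1\Phi_c(x)(1-x^2)^{a+2B}\,dx$. After your unimodular row change, the matrix $\bigl(\int_0^1\phi_j(x)x^{2l}(1-x^2)^{2B}\,dx\bigr)_{l,j}$ is just $\cR$ again.

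Step 2 then asserts that, because $\Phi_c$ ranges over an $S$-dimensional space and there are $S+3$ test functions, the vanishing moments force $\Phi_c\equiv0$. This does not follow. A linear map from an $S$-dimensional space into $\mathbb R^{S+3}$ can have a kernel, and its injectivity is literally the statement $\rank\cR=S$. Finitely many vanishing moments never force a function to vanish identically. Comparing the $\log x$ part with the rational part as $x\to0^+$ is a pointwise argument, usable only after $\Phi_c\equiv0$ is known. So no step uses any property of the tails that could make the rank full. The fallbacks are not substantiated: a triangular leading term as $B\to\infty$ would at best cover large $B$, and the Vandermonde in the nodes $2j+1$ is not worked out. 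A minor slip: $\Pi_i$ has degree $2B$ in $i$, so $\Pi_i/(2i+2j+1)$ has degree $2B-1$, not $4B-1$.

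\textbf{The missing idea.} The paper's argument uses only the recurrence $T_m+T_{m+1}=(2m+1)^{-2}$ (it works for any $T_0$), and its key input is that this recurrence has no rational-function solution. It gets there as follows.

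First, expand around the moving base $T_{i+1}$, not your fixed base $T_j$. Your remainder has length growing with $i$ and carries $(-1)^i$, so finite differences do not kill it. Instead write $T_{i+j}=(-1)^{j-1}T_{i+1}+\sum_{1\le k<j}(-1)^{j-1-k}(2i+2k+1)^{-2}$. Since $2i+2j+1$ and $(2i+2k+1)^2$ divide $\Pi_i$ for $1\le k<j\le S<B$, this gives $f_i:=\sum_j\lambda_j\Pi_iu_{i+j}=T_{i+1}D_\lambda(i)+P_\lambda(i)$. Here $D_\lambda,P_\lambda$ are polynomials with $\deg D_\lambda\le2B-1$, $\deg P_\lambda\le2B-3$, and $D_\lambda\ne0$ for $\lambda\ne0$. (The paper writes $T_i$ here, but its factor $(2X+3)^2$ shows that $T_{i+1}$ is meant.)

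Second, the $S+3$ vanishing differences of orders $2B,\dots,2B+S+2$ say the Newton interpolant of $f_0,\dots,f_{2B+S+2}$ has degree $\le2B-1$. Hence $T_{i+1}D_\lambda(i)=A(i)$ for $0\le i\le 2B+S+2$, with $\deg A\le2B-1$.

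Third, the recurrence makes
$K(X)=(2X+3)^2\bigl(A(X)D_\lambda(X+1)+A(X+1)D_\lambda(X)\bigr)-D_\lambda(X)D_\lambda(X+1)$, which has degree $\le4B$, vanish at:
\begin{itemize}
\item the $2B+S+2$ integers $0,\dots,2B+S+1$;
\item $X=-3/2$;
\item the roots of an explicit common factor of $D_\lambda(X)$ and $D_\lambda(X+1)$ of degree $2B-S-2$.
\end{itemize}
That is $4B+1$ zeros, so $K\equiv0$. Then $A/D_\lambda$ would be a rational solution of $R(X)+R(X+1)=(2X+3)^{-2}$, which comparing poles of extreme real part rules out.

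This is where the surplus rows are actually consumed: with one row fewer, the zero count would be $4B$, which is not enough.
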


\begin{proof} For any integer $i\gs1$ and $j\gs0$, in view of \eqref{eq:tailrec} we have
\begin{align*}(-1)^{i+j}T_{i+j}-(-1)^iT_i&=\sum_{0\ls k<j}\l((-1)^{i+k+1}T_{i+k+1}-(-1)^{i+k}T_{i+k}\r)
\\&=\sum_{0\ls k<j}\f{(-1)^{i+k+1}}{(2(i+k)+1)^2}
\end{align*}
and hence
\begin{equation}\label{eq:iteratedtail}
 T_{i+j}=(-1)^jT_{i}
 +\sum_{0\ls k<j}\frac{(-1)^{j-1-k}}{(2(i+k)+1)^2}.
\end{equation}
Thus, for any integer $i,j\gs0$, we have
$$u_{i+j}=\f{T_{i+j}}{2(i+j)+1}
=\f{(-1)^jT_{i}}{2(i+j)+1}+\f1{2(i+j)+1}\sum_{0\ls k<j}\f{(-1)^{j-1-k}}{(2(i+k)+1)^2}.$$

Now, let $j\in\{1,\ldots,S\}$. As $j\ls S< B$, we see that
$$\f{\Pi_i}{2(i+j)+1}\sum_{0\ls k<j}\f{(-1)^{j-1-k}}{(2(i+k)+1)^2}$$
is a polynomial in $i$ of degree at most $2B-3$. Thus, in view of \cite[p,\,126,\ (13,13)]{vW}, 
for any $a\in\{0,\ldots,S+2\}$ we have
$$\sum_{i=0}^{a+2B}(-1)^i\bi{a+2B}i\f{\Pi_i}{2(i+j)+1}\sum_{0\ls k<j}\f{(-1)^{j-1-k}}{(2(i+k)+1)^2}=0$$
and hence 
$$\cR_{a,j}=(-1)^j\sum_{i=0}^{a+2B}(-1)^i\bi{a+2B}i\f{\Pi_iT_{i+1}}{2(i+j)+1}.$$

Suppose that a nonzero column vector
$\lambda=(\lambda_1,\ldots,\lambda_{S})^T$
lies in the right kernel.  Set
\begin{equation}\label{eq:fkernel}
 f_i:=\Pi_i\sum_{j=1}^{S}\lambda_j u_{i+j}.
\end{equation}
Then, for any $n\in\{2B,2B+S+2\}$ we have
\begin{equation}\label{eq:differenceszero}
 \sum_{i=0}^n(-1)^i \bi nif_i=\sum_{j=1}^{S}\la_j\sum_{i=0}^n(-1)^i\bi ni\Pi_iu_{i+j}
 =\sum_{j=1}^{S}\la_j \cR_{n-2B,j}=0.
\end{equation}

Define
\begin{equation}\label{eq:LSandE}
 L_S(X):=\prod_{j=1}^{S}(2X+2j+1),
 \qquad
 E(X):=\prod_{S<j\ls B}(2X+2j+1),
\end{equation}
\begin{equation}\label{eq:Pstar}
 P_\lambda^*(X):=
 \sum_{j=1}^{S}(-1)^{j}\la_{j}\frac{L_S(X)}{2X+2j+1}.
\end{equation}
The map $\lambda\mapsto P_\lambda^*$ is injective, because evaluation at a root of $2X+2j+1=0$ kills all summands except the $j$-th.

Using \eqref{eq:iteratedtail} in \eqref{eq:fkernel}, we obtain
\begin{align*}f_i&=T_i\sum_{j=1}^S\f{(-1)^j\la_j\Pi_i}{2(i+j)+1}
+\sum_{j=1}^S\f{\la_j}{2(i+j)+1}\sum_{0\ls k<j}\f{(-1)^{j-1-k}\Pi_i}{(2(i+k)+1)^2}
\end{align*}
and hence
\begin{equation}\label{eq:fdecomp}
 f_i=T_{i}D_\lambda(i)+P_\lambda(i),
\end{equation}
where \begin{equation}\label{eq:Dlambda}
 D_\lambda(X)=L_S(X)E(X)^2P_\lambda^*(X),
\end{equation}
and
$$P_\lambda(i):=\sum_{j=1}^S\f{\la_j}{2(i+j)+1}\sum_{0\ls k<j}\f{(-1)^{j-1-k}\Pi_i}{(2(i+k)+1)^2}$$
is a polynomial in $i$ of degree at most $2B-3$.
Since $\deg P_\lambda^*\le S-1$, by \eqref{eq:Dlambda} we have
\begin{equation}\label{eq:Dlambdadegree}
 \deg D_\lambda\le S+2(B-S)+(S-1)=2B-1.
\end{equation}

The Newton interpolation polynomial through the values
$f_0,\ldots,f_{2B+S+2}$ is
$$f(x)=\sum_{n=0}^{2B+S+2}a_n\bi xn\ \ \t{with}\ \ a_n=\sum_{i=0}^n(-1)^i\bi nif_i.$$
For $2B\ls n\ls 2B+S+2$, we have $a_n=0$ by 
\eqref{eq:differenceszero}.
So $\deg f\ls 2B-1$.  Set $A(x):=f(x)-P_\lambda(x)$.  Then
\begin{equation}\label{eq:ADsamples}
 A(i)=f_i-P_{\la}(i)=T_{i}D_\lambda(i)\ \ (i=0,\ldots,2B+S+2).
\end{equation}
Define
\begin{equation}\label{eq:defect}
 K(X):=(2X+3)^2
 \bigl(A(X)D_\lambda(X+1)+A(X+1)D_\lambda(X)\bigr)
 -D_\lambda(X)D_\lambda(X+1).
\end{equation}
The tail recurrence and \eqref{eq:ADsamples} give
\begin{equation}\label{eq:samplezeros}
 K(i)=0\ \ (i=0,\ldots,2B+S+1).
\end{equation}
There are $2B+S+2$ such zeros.

Next set
\begin{equation}\label{eq:G0}
 G_0(X):=
 \prod_{j=1}^{S}(2X+2j+3)
 \prod_{S<j<B}(2X+2j+3)^2.
\end{equation}
Both $D_\lambda(X)$ and $D_\lambda(X+1)$ are divisible by $G_0(X)$, hence $G_0\mid K$.  Its degree is
\begin{equation}\label{eq:G0degree}
 \deg G_0=S+2(B-S-1)=2B-S-2.
\end{equation}
The roots of $G_0$ are negative half-integers and are disjoint from the zeros in \eqref{eq:samplezeros}.  There is one further zero: since $D_\lambda(-3/2)=0$ we have
\begin{equation}\label{eq:extra}
 K\l(-\f 32\r)=0.
\end{equation}
If $K$ were nonzero, these disjoint zeros would imply
\[
 \deg K\ge(2B+S+2)+(2B-S-2)+1=4B+1.
\]
On the other hand, \eqref{eq:defect} and \eqref{eq:Dlambdadegree} give $\deg K\le4B$.  It remains to show that $K$ is not the zero polynomial.

Suppose that  $K$ coincides with the zero polynomial.
Then $D_\lambda$ is nonzero, and the rational function $R=A/D_\lambda$ satisfies
\begin{equation}\label{eq:rationalshift}
 R(X)+R(X+1)=\frac1{(2X+3)^2}.
\end{equation}
After the translation $z=X+3/2$, this becomes
\[
 S_0(z)+S_0(z+1)=\frac1{4z^2}.
\]
No rational function satisfies this equation.  Indeed, choose a pole of $S_0$ with maximal real part.  At that point $S_0(z+1)$ is regular, so the right-hand side forces the pole to be $z=0$.  Choose a pole with minimal real part and inspect the equation one unit to its left.  The right-hand side then forces that pole to be $z=1$.  This leads to a contradiction since 
the minimal real part should not exceed the maximal real part.  

In view of the above, we have completed the proof of Theorem \ref{thm:rank}.
\end{proof}

Theorem \ref{thm:rank} clearly has the following consequence.

\begin{corollary} \label{cor:selectedminor} Let $B$ and $S$ be integers with $B>S>0$.
Then, there exists a  set
$ A\subset\{0,\ldots,S+2\}$ with $|A|=S$ such that 
\begin{equation}\label{eq:selectedminor}
 \det(\cR_{a,j})_{a\in A\atop 1\ls j\ls S}\ne0,
\end{equation}
where $\cR_{a,j}$ is given by \eqref{eq:Rdef}.
\end{corollary}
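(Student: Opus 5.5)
This corollary follows from Theorem~\ref{thm:rank} by elementary linear algebra, and we intend to argue it exactly that way. The matrix $\cR$ has $S+3$ rows indexed by $a\in\{0,\ldots,S+2\}$ and $S$ columns indexed by $j\in\{1,\ldots,S\}$. Its entries are real numbers, since each $\cR_{a,j}$ is a finite combination of the tails $u_{i+j}$. By \eqref{eq:rank}, its rank is $S$, i.e.\ it has full column rank.

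The key step is the standard fact that the rank of a matrix equals its row rank. Since $\rank\cR=S$, the row space of $\cR$ is $S$-dimensional. Hence among the $S+3$ row vectors $(\cR_{a,1},\ldots,\cR_{a,S})$ we can choose $S$ linearly independent ones, for instance greedily, by adding a row whenever it is not in the span of the rows already chosen. Let $A\subset\{0,\ldots,S+2\}$ be the set of indices of the chosen rows, so $\#A=S$.

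The submatrix $(\cR_{a,j})_{a\in A,\,1\ls j\ls S}$ is then an $S\times S$ matrix with linearly independent rows. It is therefore invertible, and its determinant is nonzero, which is \eqref{eq:selectedminor}. Equivalently, the rank of a matrix is the largest size of a nonvanishing minor, and here the minor uses all $S$ columns.

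No step presents a real obstacle. All the analytic content, the polynomial defect argument and the nonexistence of a rational solution to \eqref{eq:rationalshift}, is already in Theorem~\ref{thm:rank}. The only point to check is that the hypotheses match, namely that $B>S>0$ are the same assumptions under which \eqref{eq:rank} was proved.
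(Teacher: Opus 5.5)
Your proposal is correct and follows the paper's route: the paper records this corollary as an immediate consequence of Theorem~\ref{thm:rank}. Since the $(S+3)\times S$ real matrix $\cR$ has rank $S$, you can choose $S$ linearly independent rows, and the $S\times S$ minor on those rows and all columns is nonzero.
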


\section{Newton completion and the fixed scalar}\label{sec:scalar}
\setcounter{equation}{0}

Fix a set $A$ as in Corollary \ref{cor:selectedminor}, and write
\[
 A^c=\{c_1,c_2,c_3\}\subset\{0,\ldots,S+2\}.
\]
Put
\begin{equation}\label{eq:Nfull}
 N:=2B+(S+2)+1=2B+S+3.
\end{equation}
For $0\le i<N$, let $\widetilde{\cA}_B$ be the $N\times N$ matrix whose columns are
\begin{enumerate}
\item $i^r$, for $0\le r<D$;
\item $\Pi_i u_{i+j}$, for $1\le j\ls S$;
\item $\binom{i}{D+c_t}$, for $1\le t\le3$.
\end{enumerate}
Let $\cA_B$ be obtained by dividing the reference and auxiliary entries in row $i$ by $\Pi_i$, while leaving the target entries equal to $u_{i+j}$.  Then
\begin{equation}\label{eq:rowscale}
 \det\widetilde{\cA}_B
 =\left(\prod_{i=0}^{N-1}\Pi_i\right)\det\cA_B.
\end{equation}
Set
\begin{equation}\label{eq:FD}
 F_B:=\prod_{r=0}^{2B-1}(r!).
\end{equation}

\begin{proposition}\label{prop:qhat} Let $J=\{1,\ldots,S\}$ and 
$$R[A,J]=(\cR_{a,j})_{a\in A\atop j\in J}.$$
Then
\begin{equation}\label{eq:scaledidentity}
 \det\widetilde{\cA}_B
 =\pm F_B\det \cR[A,J].
\end{equation}
Consequently,
\begin{equation}\label{eq:qhat}
 \widehat q_B:=\det\cA_B
 =\pm\frac{F_B\det\cR[A,J]}
 {\prod_{i=0}^{N-1}\Pi_i}.
\end{equation}
Moreover $\widehat q_B\ne0$, and $\widehat q_B\in\Q^\times$ if $G\in\Q$.
\end{proposition}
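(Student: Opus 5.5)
The plan is to reduce $\det\widetilde{\cA}_B$ to $\det\cR[A,J]$ by a unimodular row transformation that turns each column into its sequence of forward differences at $0$. I read $D=2B$ throughout; this is forced by the column count, since $D+S+3=N=2B+S+3$. Let $M=(M_{n,i})_{0\ls n,i<N}$ with $M_{n,i}=(-1)^i\binom ni$. This matrix is lower triangular with diagonal entries $\pm1$, so $\det M=\pm1$ and $\det\widetilde{\cA}_B=\pm\det(M\widetilde{\cA}_B)$. For a column $(g(i))_{0\ls i<N}$, the $n$-th entry of $M$ times that column is $\sum_{i=0}^n(-1)^i\binom ni g(i)=(-1)^n\Delta^ng(0)$.

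Next I compute $M\widetilde{\cA}_B$ column type by column type.
\begin{enumerate}
\item For the reference columns $g(i)=i^r$ with $0\ls r<2B$, the difference $\Delta^n i^r|_{i=0}$ vanishes for $n>r$ and equals $r!$ for $n=r$. So in rows $n<2B$ these columns form an upper triangular block with diagonal $\pm r!$. In rows $n\gs 2B$ they are identically zero.
\item For the auxiliary columns $g(i)=\binom{i}{2B+c_t}$, we have $\Delta^n\binom{i}{m}|_{i=0}=\delta_{n,m}$. So each such column becomes $\pm$ the unit vector at row $n=2B+c_t$.
\item For the target columns $g(i)=\Pi_iu_{i+j}$, writing $n=2B+a$ with $0\ls a\ls S+2$, the $n$-th entry is exactly $\cR_{a,j}$ by \eqref{eq:Rdef}.
\end{enumerate}

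Hence $M\widetilde{\cA}_B$ is block upper triangular. The top-left $2B\times 2B$ block has determinant $\pm\prod_{r=0}^{2B-1}r!=\pm F_B$, and the bottom-left block is zero. The bottom-right $(S+3)\times(S+3)$ block has the columns $(\cR_{a,j})_{0\ls a\ls S+2}$ for $j\in J$, together with three signed unit vectors supported on the rows $a=c_1,c_2,c_3$. Laplace expansion along these three unit columns deletes exactly the rows indexed by $A^c$. What remains is $\pm\det(\cR_{a,j})_{a\in A,\,j\in J}=\pm\det\cR[A,J]$, which proves \eqref{eq:scaledidentity}. Combining this with \eqref{eq:rowscale} gives \eqref{eq:qhat}, since every $\Pi_i$ is a nonzero integer.

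Nonvanishing follows at once: $\det\cR[A,J]\ne0$ by the choice of $A$ in Corollary~\ref{cor:selectedminor}, and $F_B\ne0$. For rationality, suppose $G\in\Q$. By \eqref{eq:Tm}, $T_m=(-1)^m(G-S_{m-1})$ with $S_{m-1}\in\Q$, so every $u_m=T_m/(2m+1)$ is rational. All entries of $\cA_B$ are therefore rational, and $\widehat q_B\in\Q^\times$.

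The only delicate point is bookkeeping rather than ideas. One must confirm the convention $D=2B$, that the binomial column indices $2B+c_t$ land inside the range $[2B,N)$, and that the Newton-difference identities hold with consistent signs. All accumulated signs are absorbed into the $\pm$ in the statement.
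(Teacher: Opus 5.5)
Your proposal is correct and follows essentially the same route as the paper. Both arguments apply the lower-triangular finite-difference transform of determinant $\pm1$: the reference block becomes triangular with pivots $\pm r!$, the target columns become $\cR_{a,j}$ in rows $n=2B+a$, and the three binomial columns become signed unit vectors whose Laplace expansion deletes the rows indexed by $A^c$, after which \eqref{eq:rowscale}, Corollary~\ref{cor:selectedminor} and \eqref{eq:Tm} finish the proof. You also make explicit two points the paper leaves implicit: the convention $D=2B$ forced by the column count, and the block upper-triangular shape (the reference columns vanish in rows $n\ge 2B$) that justifies factoring off $\pm F_B$.
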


\begin{proof}
Apply the lower-triangular finite-difference transform
\[
 (\mathcal Df)_n:=\sum_{i=0}^n(-1)^i\binom ni f_i
 \qquad (0\le n<N).
\]
Its determinant is $\pm1$.  The first $D$ columns become triangular, with pivots $\pm r!$, so their determinant is $\pm F_B$.  In the remaining rows, the target block is $\cR$.  Finally,
\[
 \sum_{i=0}^n(-1)^i\binom ni\binom{i}{m}=(-1)^m\bi nm\sum_{m\ls i\ls n}\bi {n-m}{i-m}(-1)^{i-m}
 =(-1)^n\da_{m,n},
\]
so the three auxiliary columns become signed unit vectors in the three omitted residual rows.  Expansion along them gives \eqref{eq:scaledidentity}.  Equation \eqref{eq:qhat} follows from \eqref{eq:rowscale}; nonvanishing follows from \eqref{eq:selectedminor}.  If $G\in\Q$, then every $u_m$ is rational by \eqref{eq:Tm}.
\end{proof}

\begin{definition} \label{def:Hmin}
Assume $G=a/q\in\Q$ with $a,q\in\Z^+$ and $\gcd(a,q)=1$.  Define
\begin{equation}\label{eq:Hmin}
 H_B^{\min}:= \mathrm{the\ denominator\ of}\ q^S\widehat q_B.
\end{equation}
\end{definition}

\begin{proposition}\label{prop:Hprime}
For every prime $p$, we have
\begin{equation}\label{eq:Hprime}
 \vp(H_B^{\min})=
 \left[
 \vp\!\left(\prod_{i=0}^{N-1}\Pi_i\right)
 -\vp(F_B)
 -\vp\!\left(q^S\det\cR[A,J]\right)
 \right]_+,
\end{equation}
where $[x]_+=\max\{x,0\}$.  In particular,
\begin{equation}\label{eq:integer}
 0\ne q^S H_B^{\min}\widehat q_B\in\Z.
\end{equation}
\end{proposition}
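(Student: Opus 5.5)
The plan is to read off the formula directly from the closed form \eqref{eq:qhat} of Proposition~\ref{prop:qhat}, using only the elementary fact that for a nonzero rational number $x$ the denominator $\mathrm{den}(x)$ (in lowest terms, taken positive) satisfies
\[
 \vp(\mathrm{den}(x))=[-\vp(x)]_+\qquad\text{for every prime }p.
\]
Nothing beyond Proposition~\ref{prop:qhat} and Corollary~\ref{cor:selectedminor} should be needed.

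First I will check that every quantity in \eqref{eq:Hprime} is a well-defined finite valuation. Under the standing assumption $G=a/q\in\Q$ of Definition~\ref{def:Hmin}:
\begin{enumerate}
\item Each $T_m$ is rational by \eqref{eq:Tm}, hence each $u_m$ is rational.
\item Each $\cR_{a,j}$ in \eqref{eq:Rdef} is then rational, being an integer combination of the $u_{i+j}$ with integer weights $\Pi_i$.
\item So $\det\cR[A,J]\in\Q$, and it is nonzero by \eqref{eq:selectedminor}.
\item Since $q\neq0$, also $q^S\det\cR[A,J]\in\Q^\times$.
\item $F_B$ and $\prod_{i<N}\Pi_i$ are positive integers.
\end{enumerate}
Hence the three valuations on the right of \eqref{eq:Hprime} are finite integers. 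Likewise $q^S\widehat q_B\in\Q^\times$ by the last assertion of Proposition~\ref{prop:qhat}.

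Next, multiplying \eqref{eq:qhat} by $q^S$ gives
\[
 q^S\widehat q_B=\pm\frac{F_B\cdot q^S\det\cR[A,J]}{\prod_{i=0}^{N-1}\Pi_i}.
\]
Taking $\vp$ (the sign is irrelevant) yields
\[
 \vp(q^S\widehat q_B)=\vp(F_B)+\vp\!\left(q^S\det\cR[A,J]\right)-\vp\!\left(\prod_{i=0}^{N-1}\Pi_i\right).
\]
Applying the denominator fact to $x=q^S\widehat q_B$, whose denominator is $H_B^{\min}$ by \eqref{eq:Hmin}, gives exactly \eqref{eq:Hprime}.

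For \eqref{eq:integer}, I will note that $H_B^{\min}\cdot q^S\widehat q_B$ is, by definition of the denominator, the numerator of $q^S\widehat q_B$ up to sign. It is therefore an integer, and it is nonzero because $\widehat q_B\ne0$ and $q\ne0$. Equivalently, \eqref{eq:Hprime} shows $\vp(H_B^{\min}q^S\widehat q_B)=[\,\vp(q^S\widehat q_B)\,]_+\ge0$ for every prime $p$.

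There is no real obstacle here. The only point needing care is that $\det\cR[A,J]$ is merely rational, not necessarily an integer, so its valuation may be negative. This is harmless because the formula is stated for rational valuations. The nonvanishing supplied by Corollary~\ref{cor:selectedminor} is what makes all valuations finite, and hence makes the identity meaningful.
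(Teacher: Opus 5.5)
Your proposal is correct and is the paper's own proof: multiply \eqref{eq:qhat} by $q^S$, take $p$-adic valuations, and use that the denominator exponent of a nonzero rational $x$ is $[-\vp(x)]_+$. Your extra checks simply make explicit what the paper leaves implicit: finiteness of every valuation via Corollary~\ref{cor:selectedminor}, and reading \eqref{eq:integer} as ``a rational number times its denominator is its numerator''.
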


\begin{proof}
Take $p$-adic valuations in \eqref{eq:qhat}.  The denominator exponent of a nonzero rational number $x$ is $[-\vp(x)]_+$.
\end{proof}

\section{Pascal--Cauchy factorization}\label{sec:cb}
\setcounter{equation}{0}

Extend the binomial coefficient by zero when $i>D+a$. Then the matrix $q\cR[A,J]$ is a product of an $S\times N$ Pascal matrix, a diagonal matrix, and an $N\times S$ Cauchy matrix.  In light of the Cauchy--Binet formula, we can write
\begin{equation}\label{eq:CB}
 q^S\det\cR[A,J]=\sum_{\substack{I\subset\{0,\ldots,N-1\}\\|I|=S}}\Xi_I.
\end{equation}
We record the two determinant factors in each summand.

For $I=\{i_1<\cdots<i_S\}$ and $J=\{1,\ldots,S\}$, put
\begin{equation}\label{eq:VI}
 V(I):=\prod_{1\le u<v\le S}(i_v-i_u)
 \ \ \t{and}\ \ 
 V(J)=\prod_{0\le u<v<S}(v-u).
\end{equation}

\begin{lemma} \label{lem:pascal}
There is an integer $\Psi_A(I)$ such that
\begin{equation}\label{eq:pascal}
 \left|
 \det\!\left[\binom{a+2B}{i_\nu}\right]_{a\in A\atop 1\le\nu\le S}
 \right|
 =V(I)|\Psi_A(I)|
 \frac{\prod_{a\in A}(a+2B)!}
 {\prod_{i\in I}i!(N-1-i)!}.
\end{equation}
\end{lemma}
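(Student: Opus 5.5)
The plan is to pull out of each entry a factor that depends only on its row or only on its column. What remains is a matrix of integer polynomials evaluated at the points of $I$, and its determinant is an alternating integer polynomial, hence divisible by the Vandermonde. Write $n_a:=a+2B$ for $a\in A$. Since $a\le S+2$, we have $n_a\le 2B+S+2=N-1$.

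\textbf{Step 1 (entry factorization).} For $0\le i\le n_a$ we have
\[
 \binom{n_a}{i}=\frac{n_a!}{i!\,(N-1-i)!}\cdot\frac{(N-1-i)!}{(n_a-i)!}
 =\frac{n_a!}{i!\,(N-1-i)!}\,P_a(i),
\]
where
\[
 P_a(X):=\prod_{k=1}^{N-1-n_a}(N-n_a-X-k)
 =(n_a+1-X)(n_a+2-X)\cdots(N-1-X).
\]
Thus $P_a\in\Z[X]$ has degree $N-1-n_a=S+2-a$. For $n_a<i\le N-1$ the factors of $P_a(i)$ run from $n_a+1-i\le 0$ up to $N-1-i\ge 0$, so one factor is $0$ and $P_a(i)=0$. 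This matches the convention in this section that the binomial coefficient is extended by zero. Hence the identity
\[
 \binom{n_a}{i}=\frac{n_a!}{i!\,(N-1-i)!}\,P_a(i)
\]
holds for every $i\in\{0,\ldots,N-1\}$, and in particular for every $i\in I$.

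\textbf{Step 2 (pulling out factors).} Factor $n_a!$ out of row $a$ and $1/(i_\nu!(N-1-i_\nu)!)$ out of column $\nu$. This gives
\[
 \det\!\left[\binom{a+2B}{i_\nu}\right]_{a\in A\atop 1\le\nu\le S}
 =\frac{\prod_{a\in A}(a+2B)!}{\prod_{i\in I}i!\,(N-1-i)!}\,
 \det\bigl[P_a(i_\nu)\bigr]_{a\in A\atop 1\le\nu\le S}.
\]

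\textbf{Step 3 (Vandermonde divisibility).} Consider $\Delta(x_1,\ldots,x_S):=\det[P_a(x_\nu)]$ in $\Z[x_1,\ldots,x_S]$. It is alternating in the $x_\nu$, so it vanishes when $x_u=x_v$. Since $x_v-x_u$ is monic of degree one in $x_v$ over $\Z[\text{other variables}]$, division with remainder shows $(x_v-x_u)\mid\Delta$ in $\Z[x]$. These linear forms are pairwise non-associate primes in the UFD $\Z[x]$, so their product divides $\Delta$. Hence $\Delta=\prod_{u<v}(x_v-x_u)\cdot\Psi_A(x)$ with $\Psi_A\in\Z[x_1,\ldots,x_S]$.

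Set $\Psi_A(I):=\Psi_A(i_1,\ldots,i_S)\in\Z$. Substituting $x_\nu=i_\nu$ and taking absolute values in Step 2 yields \eqref{eq:pascal}. Since $V(I)>0$, the absolute value may be placed on $\Psi_A(I)$.

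The only delicate point is the case $i>n_a$, where the closed form $n_a!/(i!(n_a-i)!)$ is meaningless. The falling-factorial form $P_a$ handles it uniformly because it vanishes exactly there. Everything else is routine.
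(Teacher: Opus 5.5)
Your proposal is correct and follows the paper's proof: you factor $(a+2B)!$ from each row and $1/(i!(N-1-i)!)$ from each column, which leaves the integer polynomials $P_a(X)=(a+2B+1-X)\cdots(N-1-X)$, and their alternating determinant is $V(I)$ times an integer polynomial; your explicit checks that $P_a(i)=0$ for $a+2B<i\le N-1$ and that the Vandermonde divisibility holds over $\Z$ supply details the paper leaves implicit. One small slip: $\prod_{k=1}^{N-1-n_a}(N-n_a-X-k)$ equals $(1-X)\cdots(N-1-n_a-X)$, so it should read $\prod_{k=1}^{N-1-n_a}(N-X-k)$, but you only use the correct expanded form afterwards, so nothing downstream is affected.
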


\begin{proof}
Factor $(a+2B)!$ from row $a$ and $1/(i!(N-1-i)!)$ from column $i$.  The remaining entry is
\[
 P_a(i):=\frac{(N-1-i)!}{(2B+a-i)!}
 =\prod_{r=a+1}^{S+2}(2B+r-i),
\]
a polynomial in $i$ with integer coefficients.  The determinant $\det[P_a(i_\nu)]$ is an alternating polynomial in $i_1,\ldots,i_S$, hence it is $V(I)$ times a symmetric polynomial with integer coefficients.  Its value at integer points is $\Psi_A(I)\in\Z$.
\end{proof}

\begin{lemma}\label{lem:cauchy}
For $J=\{1,\ldots,S\}$,
\begin{equation}\label{eq:cauchy}
 \det\!\left[\frac1{2(i_\nu+j)+1}\right]_{1\le\nu\le S \atop 1\le j\le S}
 =\pm2^{S(S-1)}
 \frac{V(I)V(J)}
 {\prod_{\nu=1}^S\prod_{j=1}^{S}(2(i_\nu+j)+1)}.
\end{equation}
\end{lemma}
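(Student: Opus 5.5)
The plan is to recognize the matrix in \eqref{eq:cauchy} as a classical Cauchy matrix and then substitute into Cauchy's determinant formula. Write
\[
 \frac1{2(i_\nu+j)+1}=\frac1{x_\nu+y_j}\qquad\text{with } x_\nu:=2i_\nu,\ \ y_j:=2j+1 .
\]
Cauchy's formula states that for indeterminates $x_1,\ldots,x_S$ and $y_1,\ldots,y_S$ with all $x_\nu+y_j\ne0$,
\[
 \det\!\left[\frac1{x_\nu+y_j}\right]_{1\le\nu,j\le S}
 =\frac{\prod_{1\le u<v\le S}(x_v-x_u)(y_v-y_u)}{\prod_{\nu=1}^S\prod_{j=1}^S(x_\nu+y_j)} .
\]
I would either quote this as standard or reprove it briefly. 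Multiplying the determinant by $\prod_{\nu,j}(x_\nu+y_j)$ gives a polynomial. This polynomial vanishes whenever $x_u=x_v$ or $y_u=y_v$, so it is divisible by both Vandermonde products. Comparing total degrees, $S^2-S=2\binom S2$, shows the quotient is a constant. That constant can be fixed by induction on $S$: subtract the last row from the others and the last column from the others, then factor out the resulting terms. Alternatively, one can look at the leading behaviour as $x_\nu\to -y_\nu$ along the diagonal.

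Next I substitute the specific values. For the first factor, $x_v-x_u=2(i_v-i_u)$, so
\[
 \prod_{u<v}(x_v-x_u)=2^{S(S-1)/2}V(I).
\]
For the second factor, $y_v-y_u=2(v-u)$ for $1\le u<v\le S$. Shifting the indices down by one leaves the differences unchanged, so this product is $2^{S(S-1)/2}V(J)$ with $V(J)$ as defined in \eqref{eq:VI}. Multiplying the two factors gives the power $2^{S(S-1)}$, and the denominator is exactly $\prod_{\nu}\prod_j(2(i_\nu+j)+1)$. This yields \eqref{eq:cauchy}. The $\pm$ absorbs any ordering or sign convention, for instance if the rows are listed in an order other than increasing $i_\nu$.

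The only point that needs care is applicability: the denominators must be nonzero. This is automatic, since $2(i_\nu+j)+1$ is an odd positive integer for $i_\nu\ge0$ and $j\ge1$.

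I expect no real obstacle. The main thing to get right is the bookkeeping of the powers of $2$. Each Vandermonde contributes $2^{\binom S2}$, and one must check that the paper's indexing of $V(J)$ over $0\le u<v<S$ gives the same product as the differences of $j$ over $1\le j\le S$.
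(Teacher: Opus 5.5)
Your proposal is correct and takes the same route as the paper, which justifies the lemma only by a remark citing Cauchy's determinant formula. Your substitution $x_\nu=2i_\nu$, $y_j=2j+1$ and your count of a factor $2^{\binom S2}$ from each Vandermonde (with $V(J)$ unchanged by the index shift) supply the bookkeeping that the paper leaves implicit.
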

\begin{remark} This follows from Cauchy's determinant formula
$$\det\l[\f1{x_i+y_j}\r]_{1\ls i,j\ls n}=\f{\prod_{1\ls i<j\ls n}(x_j-x_i)(y_j-y_i)}{\prod_{i=1}^n\prod_{j=1}^n(x_i+y_j)}$$
(see, e.g., \cite[(5.5)]{K05}).
\end{remark}

Combining the two lemmas, we obtain the exact absolute-value identity
\begin{equation}
 |\Xi_I|
 =2^{S(S-1)}V(J)V(I)^2|\Psi_A(I)|
 \frac{\prod_{a\in A}(a+2B)!}
 {\prod_{i\in I}i!(N-1-i)!}
 \times
 \prod_{i\in I}
 \frac{|qT_{i+1}|\Pi_i}
 {\prod_{j=1}^{S}(2(i+j)+1)}.
 \label{eq:summand}
\end{equation}
The two copies of $V(I)$ are essential.

\section{Prime-power layers and the positive-part bridge}\label{sec:bridge}
\setcounter{equation}{0}

For an integer $Q\ge1$, define
\begin{equation}\label{eq:PhiQ}
 \Phi_Q(n):=\sum_{r=0}^{n-1}\left\lfloor\frac rQ\right\rfloor.
\end{equation}
If $n=qQ+r$ with $0\le r<Q$, then
\begin{equation}\label{eq:PhiExact}
 \Phi_Q(n)=\frac{Qq(q-1)}2+rq
 =\frac{n^2}{2Q}-\frac n2+\frac{r(Q-r)}{2Q}.
\end{equation}
In particular,
\begin{equation}\label{eq:PhiError}
 0\le
 \Phi_Q(n)-\left(\frac{n^2}{2Q}-\frac n2\right)
 \le\frac Q8.
\end{equation}

For $K\ge1$, set
\begin{equation}\label{eq:NKQ}
 N_{K,Q}(i):=
 \#\{1\le h\ls K:\,Q\mid 2i+2h+1\}.
\end{equation}
For $I\subset\{0,\ldots,N-1\}$ and $r\in\Z$, write
\begin{equation}\label{eq:occupancyQ}
 n_{Q,r}(I):=\#\{i\in I:i\equiv r\pmod Q\}.
\end{equation}
For an odd prime power $Q=p^\nu$, define
\begin{equation}\label{eq:CQA}
 C_Q^A:=\sum_{a\in A}\left\lfloor\frac{a+2B}{Q}\right\rfloor
 \ \ \t{and}\ \ 
 F_{N,Q}(i):=
 \left\lfloor\frac iQ\right\rfloor+
 \left\lfloor\frac{N-1-i}{Q}\right\rfloor.
\end{equation}
The complete local layer of a summand is
\begin{align}
 \lambda_Q^A(I):={}&C_Q^A
 +2\sum_{r\bmod Q}\binom{n_{Q,r}(I)}2
 \notag\\
 &+\sum_{i\in I}
 \left(
 2N_{B,Q}(i)-N_{S,Q}(i)
 -2\1_{Q\le2i+1}-F_{N,Q}(i)
 \right).
 \label{eq:lambda}
\end{align}
Set
\begin{equation}\label{eq:mQA}
 m_{Q,B}^A:=
 \min_{\substack{I\subset\{0,\ldots,N-1\}\\|I|=S}}
 \lambda_Q^A(I).
\end{equation}

\begin{lemma} \label{lem:RpLower}
For every odd prime $p$, we have
\begin{equation}\label{eq:RpLower}
 \vp\!\left(q^S\det\cR[A,J]\right)
 \ge\sum_{\nu\ge1}m_{p^\nu,B}^A.
\end{equation}
\end{lemma}

\begin{proof}
For an odd prime $p$, the two Vandermondes in \eqref{eq:summand} satisfy
\begin{equation}\label{eq:fulltree}
 \vp(V(I))=
 \sum_{\nu\ge1}\sum_{r\bmod p^\nu}
 \binom{n_{p^\nu,r}(I)}2.
\end{equation}
The row factorials, $\Pi_i$, the Cauchy denominator, and the two factorial denominators give the remaining terms of \eqref{eq:lambda}.  Under $G=a/q$,  \eqref{eq:Tm} implies
\begin{equation}\label{eq:tailvpfull}
 \vp(qT_{i+1})
 \ge-2\sum_{\nu\ge1}\1_{p^\nu\le2i+1}.
\end{equation}
The fixed factor $V(J)$ and the integer $\Psi_A(I)$ have nonnegative valuation and may be omitted.  Consequently
\[
 \vp(\Xi_I)\ge\sum_{\nu\ge1}\lambda_{p^\nu}^A(I).
\]
The nonarchimedean triangle inequality gives
\[
 \vp\!\left(\sum_I\Xi_I\right)
 \ge\min_I\vp(\Xi_I)
 \ge\min_I\sum_\nu\lambda_{p^\nu}^A(I)
 \ge\sum_\nu\min_I\lambda_{p^\nu}^A(I).
\]
Only finitely many layers are nonzero.
\end{proof}

The denominator layer corresponding to $Q$ is
\begin{equation}\label{eq:aQB}
 a_{Q,B}:=
 2\sum_{i=0}^{N-1}N_{B,Q}(i)-\Phi_Q(D).
\end{equation}
Thus, for every odd prime $p$,
\begin{equation}\label{eq:ApLayers}
 A_{p,B}:=
 \vp\!\left(\prod_{i=0}^{N-1}\Pi_i\right)-\vp(F_D)
 =\sum_{\nu\ge1}a_{p^\nu,B}.
\end{equation}

\begin{theorem}\label{thm:saturation}
For every odd prime power $Q$ and every $B\ge20$, we have
\begin{equation}\label{eq:saturation}
 a_{Q,B}\ge m_{Q,B}^A.
\end{equation}
\end{theorem}

\begin{proof}
Use the test set $I=J=\{0,\ldots,S-1\}$ in \eqref{eq:mQA}.  The continuous block $J$ has
\begin{equation}\label{eq:PhiCollision}
 \sum_{r\bmod Q}\binom{n_{Q,r}(J)}2=\Phi_Q(S).
\end{equation}
A direct subtraction gives
\begin{align}
 a_{Q,B}-\lambda_Q^A(J)
 ={}&2\sum_{i=S}^{N-1}N_{B,Q}(i)
 -\Phi_Q(D)-C_Q^A-2\Phi_Q(S)
 \notag\\
 &+\sum_{i=0}^{S-1}
 \left(
 N_{S,Q}(i)+2\1_{Q\le2i+1}+F_{N,Q}(i)
 \right).
 \label{eq:subtract}
\end{align}
The second line is nonnegative.  Since
$\{a+2B:\ a\in A\}\subset\{2B,\ldots,N-1\}$,
\begin{equation}\label{eq:rowdom}
 \Phi_Q(2B)+C_Q^A\le\Phi_Q(N).
\end{equation}
It is enough to prove
\begin{equation}\label{eq:crossgoal}
 2\sum_{i=S}^{N-1}N_{B,Q}(i)
 \ge\Phi_Q(N)+2\Phi_Q(S).
\end{equation}

Consider the two sets
\[
 X:=\{2i+1:S\le i\le N-1=2B+S+2\}
 \ \ \t{and}\ \ 
 Y:=\{-2h-2:0\le h<B\}.
\]
Their cardinalities are $2B+3$ and $B$.  If $x_r,y_r$ denote their residue occupancies modulo $Q$, then
\begin{equation}\label{eq:crosscount}
 \sum_{i=S}^{N-1}N_{B,Q}(i)=\sum_{r\bmod Q}x_ry_r.
\end{equation}
Because $Q$ is odd, multiplication by $2$ permutes the residue classes.  Hence
\[
 \sum_r\binom{x_r}{2}=\Phi_Q(2B+3),
 \qquad
 \sum_r\binom{y_r}{2}=\Phi_Q(B).
\]
Among all $Q$-tuples of nonnegative integers with total $n$, the collision number $\sum_r\binom{c_r}{2}$ is minimized by the balanced occupancy and equals $\Phi_Q(n)$.  Applying this to $X\cup Y$ yields
\begin{equation}\label{eq:crosslower}
 \sum_{i=S}^{N-1}N_{B,Q}(i)
 \ge
 \Phi_Q(3B+3)-\Phi_Q(2B+3)-\Phi_Q(B).
\end{equation}
Thus \eqref{eq:crossgoal} follows from
\begin{align}
 2\bigl(
 \Phi_Q(3B+3)-\Phi_Q(2B+3)-\Phi_Q(B)
 \bigr)
 \ge\Phi_Q(2B+S+3)+2\Phi_Q(S).
 \label{eq:PhiIneq}
\end{align}

If $Q>2B+S+3$, the right-hand side is zero and the left-hand side is nonnegative by superadditivity of $\Phi_Q$.  Suppose $Q\le K:=2B+S+3$.  From \eqref{eq:PhiError}, the left side minus the right side in \eqref{eq:PhiIneq} is at least
\begin{equation}\label{eq:PhiLower}
 \frac{2B^2-2BS-\frac32S^2-3S-\frac92}{Q}
 +\frac K2+S-\frac{7Q}{8}.
\end{equation}
This expression is decreasing in $Q$, so it is minimized at $Q=K$.  Since
\[
 2B^2-2BS-\frac32S^2-3S-\frac92
 =K\left(B-\frac32S-\frac32\right)+3S,
\]
we obtain
\begin{align*}
 \eqref{eq:PhiLower}
 \ge \frac B4-\frac{7S}{8}-\frac{21}{8}+\frac{3S}{K}
 \ge \frac{33B}{160}-\frac{21}{8}>0,
\end{align*}
using $S\le B/20$ and $B\ge20$.  This proves \eqref{eq:PhiIneq}, hence \eqref{eq:crossgoal}, and finally
$a_{Q,B}\ge\lambda_Q^A(J)\ge m_{Q,B}^A$.
\end{proof}

\begin{corollary} \label{cor:positivebridge}
For any odd prime $p$ and $J=\{1,\ldots,S\}$, we have
\begin{equation}\label{eq:positivebridge}
 [A_{p,B}-R_{p,B}]_+
 \le A_{p,B}-\sum_{\nu\ge1}m_{p^\nu,B}^A,
\end{equation}
where
$R_{p,B}=\vp(q^S\det\cR[A,J])$.
Consequently,
\begin{equation}\label{eq:heightbridge}
 \log H_B^{\min}
 \le
 \sum_{\substack{p\ \mathrm{odd}\\\nu\ge1}}
 \bigl(a_{p^\nu,B}-m_{p^\nu,B}^A\bigr)\log p.
\end{equation}
\end{corollary}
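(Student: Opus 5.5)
Combine Lemma~\ref{lem:RpLower} with Theorem~\ref{thm:saturation} prime by prime, then sum over odd primes using Proposition~\ref{prop:Hprime}.

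\emph{Step 1 (the local inequality).} Fix an odd prime $p$. By Lemma~\ref{lem:RpLower}, $R_{p,B}\ge\sum_{\nu\ge1}m_{p^\nu,B}^A$, so
\[
A_{p,B}-R_{p,B}\le A_{p,B}-\sum_{\nu\ge1}m_{p^\nu,B}^A .
\]
By \eqref{eq:ApLayers} the right-hand side equals $\sum_{\nu\ge1}\bigl(a_{p^\nu,B}-m_{p^\nu,B}^A\bigr)$. Each summand is nonnegative by Theorem~\ref{thm:saturation}, applied with $Q=p^\nu$ (here we use the standing hypothesis $B\ge20$, $S\le B/20$). So the right-hand side is $\ge0$ and also dominates $A_{p,B}-R_{p,B}$. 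Hence it dominates $\max\{A_{p,B}-R_{p,B},0\}=[A_{p,B}-R_{p,B}]_+$, which is \eqref{eq:positivebridge}.

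Saturation is exactly what makes this step work. Without it, a negative right-hand side could not bound the positive part. Only finitely many $\nu$ give nonzero layers, so all sums are finite.

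\emph{Step 2 (the height bound).} By Proposition~\ref{prop:Hprime}, we have $\vp(H_B^{\min})=[A_{p,B}-R_{p,B}]_+$ for every prime $p$. Writing $\log H_B^{\min}=\sum_p\vp(H_B^{\min})\log p$ and inserting Step~1 for odd $p$ gives \eqref{eq:heightbridge}, provided the prime $p=2$ contributes nothing.

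\emph{Step 3 (the prime $2$).} This is the only point needing care. I would show $v_2(H_B^{\min})=0$ directly. Every factor of $\Pi_i$ is odd, so $v_2\bigl(\prod_i\Pi_i\bigr)=0$. Also $v_2(F_B)\ge0$. So $A_{2,B}\le0$.

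For $R_{2,B}$, the Cauchy--Binet expansion \eqref{eq:summand} gives $v_2(\Xi_I)\ge0$. Indeed, the Pascal factor is an integer, the Cauchy denominators are odd, $2^{S(S-1)}$ and $V(J)$ are integers, and $\Pi_i/\prod_j(2(i+j)+1)$ is an odd integer.

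The remaining factor is $qT_{i+1}$. By \eqref{eq:Tm}, $qT_{i+1}=\pm\bigl(a-qS_{i}\bigr)$, and $S_i$ has odd denominator. So $v_2(qT_{i+1})\ge0$, giving $R_{2,B}\ge0$ and hence $[A_{2,B}-R_{2,B}]_+=0$.

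If instead the paper intends the sum in \eqref{eq:heightbridge} to be read with the $2$-part handled separately, this argument is exactly what justifies dropping it. I expect this $2$-adic bookkeeping to be the only potential obstacle. The odd-prime part is formal once Lemma~\ref{lem:RpLower} and Theorem~\ref{thm:saturation} are in hand.
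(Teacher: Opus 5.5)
Your proposal is correct and follows the paper's proof. Lemma~\ref{lem:RpLower} bounds $A_{p,B}-R_{p,B}$ by $\sum_{\nu}(a_{p^\nu,B}-m^A_{p^\nu,B})$, Theorem~\ref{thm:saturation} makes that sum nonnegative so it dominates the positive part, and summing over odd primes via \eqref{eq:Hprime} gives \eqref{eq:heightbridge}; your treatment of $p=2$ is the content of the paper's Lemma~\ref{lem:p2}, which argues more directly that every entry of $q\cR$ lies in $\Z_{(2)}$ instead of going summand by summand through the Cauchy--Binet expansion.
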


\begin{proof}
By Theorem \ref{thm:saturation}, every summand
$a_{p^\nu,B}-m_{p^\nu,B}^A$ is nonnegative.  By Lemma \ref{lem:RpLower},
\[
 [A_{p,B}-R_{p,B}]_+
 \le\left[A_{p,B}-\sum_\nu m_{p^\nu,B}^A\right]_+
 =A_{p,B}-\sum_\nu m_{p^\nu,B}^A.
\]
Sum over odd primes and use \eqref{eq:Hprime}.
\end{proof}

\begin{lemma} \label{lem:p2}
Suppose that $G=a/q\in\Q$ with $a,q\in\Z^+$ and $\gcd(a,q)=1$. Then
\begin{equation}\label{eq:p2zero}
 [A_{2,B}-R_{2,B}]_+=0.
\end{equation}
\end{lemma}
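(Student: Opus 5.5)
The plan is a direct $2$-adic integrality check; no saturation argument is needed at $p=2$. By analogy with \eqref{eq:ApLayers}, I read
\[
A_{2,B}=v_2\Bigl(\prod_{i=0}^{N-1}\Pi_i\Bigr)-v_2(F_B),
\qquad
R_{2,B}=v_2\bigl(q^S\det\cR[A,J]\bigr).
\]
This matches the bracket in \eqref{eq:Hprime}. The idea is to show that $A_{2,B}\le 0$ and $R_{2,B}\ge 0$; then $A_{2,B}-R_{2,B}\le 0$, and the positive part vanishes.

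First I bound $A_{2,B}$. Each $\Pi_i=\prod_{h=1}^B(2(h+i)+1)^2$ is a product of odd integers, so $v_2(\prod_i\Pi_i)=0$. Since $F_B=\prod_{r=0}^{2B-1}r!$ is a positive integer, $v_2(F_B)\ge0$. Hence $A_{2,B}=-v_2(F_B)\le0$.

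Second I show $R_{2,B}\ge0$ by proving that every entry of $q\cR[A,J]$ lies in $\Z_{(2)}$, the ring of rationals with odd denominator. By \eqref{eq:Tm}, $qT_m=(-1)^m(a-qS_{m-1})$. Here $a\in\Z$, and $S_{m-1}$ is a finite sum of terms $\pm1/(2k+1)^2$ with odd denominators, so $qT_m\in\Z_{(2)}$. Dividing by the odd number $2m+1$ keeps $qu_m=qT_m/(2m+1)$ in $\Z_{(2)}$.

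Then, by \eqref{eq:Rdef},
\[
q\cR_{a,j}=\sum_{i=0}^{a+2B}(-1)^i\binom{a+2B}{i}\Pi_i\,(qu_{i+j}).
\]
This is an integer combination of elements of $\Z_{(2)}$, so $q\cR_{a,j}\in\Z_{(2)}$. Consequently $q^S\det\cR[A,J]=\det(q\cR[A,J])\in\Z_{(2)}$. It is nonzero by Corollary \ref{cor:selectedminor}, so its $2$-adic valuation is finite and nonnegative.

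Combining the two steps gives $A_{2,B}-R_{2,B}\le -v_2(F_B)-0\le0$, which is \eqref{eq:p2zero}. I do not expect a genuine obstacle here. The only point needing care is the bookkeeping convention: $A_{p,B}$ was written in \eqref{eq:ApLayers} only for odd $p$, and with $F_D$ in place of $F_B$. I would state explicitly that at $p=2$ it means $v_2(\prod_i\Pi_i)-v_2(F_B)$, matching \eqref{eq:Hprime}. Then the lemma says exactly that $v_2(H_B^{\min})=0$.
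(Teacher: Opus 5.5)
Your proposal is correct and follows the paper's proof essentially verbatim: oddness of each $\Pi_i$ gives $A_{2,B}=-v_2(F_B)\le 0$, and $2$-integrality of $qT_m$ (hence of $qu_m$, since $2m+1$ is odd) puts every entry of $q\cR$ in $\Z_{(2)}$, so $R_{2,B}\ge 0$. Your reading of the $F_D$ in \eqref{eq:ApLayers} as $F_B$ (i.e.\ $D=2B$) is a sensible clarification, and your non-strict bound $A_{2,B}\le 0$ suffices; the paper's strict $A_{2,B}<0$ is not needed.
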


\begin{proof}
As $\Pi_i$ is odd, we have
$A_{2,B}=-v_2(F_B)<0$.  Moreover $qT_m$ has only odd denominators; division by $2m+1$ preserves $2$-integrality.  Thus every entry of $q\cR$ belongs to $\Z_{(2)}$, and
$R_{2,B}=v_2(q^S\det\cR[A,J])\ge0$.  The positive part is zero.
\end{proof}

\subsection{Corrected full-row stability}
\setcounter{equation}{0}

The exact scalar uses $U=N-1=2B+S+2$, while the clean limiting model uses
\begin{equation}\label{eq:U0}
 U_0:=2B+S-1.
\end{equation}
Let $m_{Q,B}^{(0)}$ denote the analogue of \eqref{eq:mQA} with the consecutive row set $\{0,\ldots,S-1\}$, upper index $U_0$, and the corresponding factorial term.

\begin{lemma}\label{lem:stability}
There is an absolute constant $C$ such that, for every odd prime power $Q$,
we have \begin{equation}\label{eq:stability}
 |m_{Q,B}^A-m_{Q,B}^{(0)}|
 \le C\left(1+\frac BQ\right).
\end{equation}
The same bound holds for the corresponding denominator layers.  Hence
\begin{equation}\label{eq:stabilitysum}
 \sum_{\substack{p\ \mathrm{odd}\\\nu\ge1}}
 \left|
 (a_{p^\nu,B}-m_{p^\nu,B}^A)
 -(a_{p^\nu,B}^{(0)}-m_{p^\nu,B}^{(0)})
 \right|\log p
 =o(B^2).
\end{equation}
\end{lemma}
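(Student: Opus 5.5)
The plan is to compare the two minimisation problems term by term through the explicit local layer \eqref{eq:lambda}, and then sum the resulting errors over prime powers. Write $\lambda_Q^{(0)}(I)$ for the model layer. It uses the row set $\{0,\ldots,S-1\}$ in place of $A$, so the row term $C_Q^A$ becomes $\sum_{a=0}^{S-1}\lfloor (a+2B)/Q\rfloor$. It also uses $U_0$ in place of $N-1$ in the factorial term $F_{N,Q}(i)$ and in the admissible range of $I$. The terms $2\sum_r\binom{n_{Q,r}(I)}2$, $2N_{B,Q}(i)$, $N_{S,Q}(i)$ and $\1_{Q\le 2i+1}$ are identical in the two models. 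So for a fixed $I\subset\{0,\ldots,U_0\}$ only two differences remain.

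\emph{Row term.} The set $A\subset\{0,\ldots,S+2\}$ has $S$ elements. Order both $A$ and $\{0,\ldots,S-1\}$ increasingly and pair them. The $k$-th element of $A$ exceeds $k-1$ by at most $3$. A floor $\lfloor x/Q\rfloor$ jumps at most once when $x$ moves by at most $3\le Q$, and it jumps only when $x$ lies within $3$ of a multiple of $Q$. The values $a+2B$ run over an interval of length $\le S+3$, so at most $3((S+3)/Q+1)$ of them are near a multiple of $Q$. Hence
\[
 |C_Q^A-C_Q^{(0)}|\le C(1+B/Q).
\]

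\emph{Factorial term.} The same reasoning applies to $\lfloor (N-1-i)/Q\rfloor-\lfloor (U_0-i)/Q\rfloor$, a shift by $3$, summed over the $S$ elements of $I$. This sum is bounded by the number of $i\in I$ with $U_0-i$ within $3$ of a multiple of $Q$. I would first try to bound that number by $O(S/Q+1)$ using that such $i$ lie in at most $O(S/Q+1)$ short residue windows. This does not follow for arbitrary $I$, since $I$ can concentrate in one residue class. However, concentration in one class is penalised quadratically by the Vandermonde term. So I would restrict to near-minimisers, for which every residue class has occupancy $O(S/Q+1)$. That restriction reduces to a balancing argument like the one in Theorem~\ref{thm:saturation}.

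\emph{Index ranges.} The ranges of $I$ differ: the exact problem also allows $I$ to meet $\{U_0+1,\ldots,N-1\}$, a set of three indices. The inequality $m^A\le m^{(0)}+C(1+B/Q)$ follows from the pointwise comparison, because the model range is contained in the exact one. For the reverse inequality, take a minimiser $I$ of the exact problem. Replace each of its at most three elements above $U_0$ by an unused element of $\{0,\ldots,U_0\}$, chosen in a residue class of minimal occupancy. Each replacement changes $2N_{B,Q}$, $N_{S,Q}$, the indicator and $F$ by $O(B/Q+1)$. Choosing a least-occupied class raises the collision term by at most $O(S/Q+1)$.

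\emph{Denominator layers.} The layers $a_{Q,B}$ and $a^{(0)}_{Q,B}$ differ only through three extra rows $i$ in $\sum_i N_{B,Q}(i)$, each contributing at most $B/Q+1$, and through the index of $\Phi_Q$, where $\Phi_Q(n+3)-\Phi_Q(n)\le 3n/Q$. This gives the same bound $C(1+B/Q)$.

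\emph{Summation.} Every layer vanishes once $Q$ exceeds $2(N+B)=O(B)$, since then no $2i+2h+1$, no factorial argument and no row index reaches $Q$. The sum in \eqref{eq:stabilitysum} is therefore at most
\[
 C\sum_{p^\nu\le cB}\Bigl(1+\frac B{p^\nu}\Bigr)\log p=O(B)+O(B\log B)
\]
by Chebyshev and Mertens, which is $o(B^2)$.

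The main obstacle is the factorial-term step together with the range change. The pointwise comparison is only uniform for occupancy-balanced $I$, so the proof needs an a priori structural statement about the minimisers of \eqref{eq:mQA}. I expect this to require the most care, and it may need a larger constant or an extra $\log B$ factor, which is harmless for \eqref{eq:stabilitysum}.
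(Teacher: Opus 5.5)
Your route is the paper's. You compare the row terms for $A$ and $\{0,\ldots,S-1\}$, count the indices where $\lfloor (N-1-i)/Q\rfloor$ and $\lfloor (U_0-i)/Q\rfloor$ differ, replace the at most three indices above $U_0$, and sum $O(1+B/Q)\log p$ over prime powers $Q=O(B)$. Your pairing argument for $C_Q^A$ and your least-occupied-class replacement are slightly finer than the paper's ``symmetric difference at most six'' and ``every residue class contains at most $1+U/Q$ admissible indices'', but they serve the same purpose. Your cutoff $Q\le 2(N+B)$ is the accurate one. The paper's $p^\nu<5B$ is a little too small, since $N_{B,Q}(i)$ can be nonzero for $Q$ up to about $6B+2S$; this is harmless for $o(B^2)$.

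The step you flag as the main obstacle is not one, but as written your treatment of it is a gap. You make the factorial-term comparison depend on a claim you do not prove: that near-minimisers have occupancy $O(S/Q+1)$ in every residue class. You also assert that the pointwise comparison is uniform only for balanced $I$, which is false. No structural information about minimisers is needed, because the required bound is $C(1+B/Q)$, not $C(1+S/Q)$, and the index range itself has only $2B+S$ elements.

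Here is the direct count. Since $Q\ge3$ and $N-1=U_0+3$, the difference $\lfloor (U_0+3-i)/Q\rfloor-\lfloor (U_0-i)/Q\rfloor$ lies in $\{0,1\}$. It equals $1$ only when $U_0-i$ is congruent to $Q-3$, $Q-2$ or $Q-1$ modulo $Q$. Each residue class meets $\{0,\ldots,U_0\}$ in at most $\lceil (2B+S)/Q\rceil$ indices. So the whole range contains at most $3\lceil (2B+S)/Q\rceil=O(1+B/Q)$ such indices, and hence so does every $I$, however concentrated. This is exactly the paper's remark that replacing $U$ by $U_0$ changes the floor ``only for $O(1+B/Q)$ indices''.

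With this, $|\lambda_Q^A(I)-\lambda_Q^{(0)}(I)|\le C(1+B/Q)$ holds uniformly for all $I\subset\{0,\ldots,U_0\}$. Both directions of your minimum comparison then go through verbatim. You should drop the near-minimiser detour, and with it the feared extra $\log B$.
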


\begin{proof}
The two row sets have symmetric difference at most six, and each row contribution is $O(1+B/Q)$.  On the common index range, replacing $U=N-1$ by $U_0$ changes
$\lfloor(U-i)/Q\rfloor$ only for $O(1+B/Q)$ indices.  The full range contains three additional indices.  If a minimizing set uses any of them, replace each by an unused index in the common range.  The additive local cost changes by $O(1+B/Q)$.  The change in the double-Vandermonde occupancy is also $O(1+B/Q)$ because every residue class contains at most $1+U/Q$ admissible indices.  At most three replacements are needed, proving \eqref{eq:stability}.

All nonzero layers satisfy $p^\nu<5B$ for large $B$.  Therefore
\[
 B\sum_{p^\nu<5B}\frac{\log p}{p^\nu}=O(B\log B),
 \qquad
 \sum_{p^\nu<5B}\log p=O(B\log B),
\]
where the second estimate even follows from the trivial bound on the number of prime powers.  This proves \eqref{eq:stabilitysum}.  The three extra real indices and the change of the row factorials likewise alter every normalized Cauchy--Binet term by $\exp(O(B\log B))$; the number of summands has logarithm $O(B)$.  Thus the real-place ledger has the same $B^2$ coefficient.
\end{proof}

\section{The odd small-prime constant}\label{sec:small}
\setcounter{equation}{0}

This section rederives the range of odd prime powers $Q\le S$.  Put
\begin{equation}\label{eq:tU}
 t:=\frac QB,
 \qquad
 u:=\frac\rho t=\frac SQ+o(1),
 \qquad
 u=n+v,
 \quad n=\lfloor u\rfloor,
 \quad 0\le v<1.
\end{equation}
Let
\begin{equation}\label{eq:alphabetagamma}
 \alpha(v):=\{20v\},
 \qquad
 \beta(v):=\{41v\},
 \qquad
 \gamma(v):=\{40v\},
\end{equation}
and, for $0\le y<1$ set
\begin{equation}\label{eq:zv}
 z(y):=\left(\frac12-y\right)\bmod1.
\end{equation}
The exact limiting residue counts are
\begin{align}
 N_B(y)&=20n+\lfloor20v\rfloor+\1_{z(y)<\alpha(v)},
 \label{eq:NBsmall}\\
 N_S(y)&=n+\1_{z(y)<v},
 \label{eq:NSsmall}\\
 F(y)&=41n+\lfloor41v\rfloor-\1_{y>\beta(v)}.
 \label{eq:Fsmall}
\end{align}
Define
\begin{align}
 h_v(y):={}&2\lfloor20v\rfloor-\lfloor41v\rfloor
 +2\1_{z(y)<\alpha(v)}
 -\1_{z(y)<v}
 +\1_{y>\beta(v)}.
 \label{eq:hv}
\end{align}
After the common part of the base cost is removed, the marginal ladder in residue class $y$ is
\begin{equation}\label{eq:smallladder}
 -2n-2+h_v(y)+2r,
 \ \ (r=0,1,2,\ldots) .
\end{equation}
The single terminal anomaly occurring for $y<1/2$ lies beyond the lowest $u$ units of mass and does not enter the minimum.

For $s\ge0$, let $\cK_v(s)$ be the integral of the lowest $s$ units of mass in the multiset
\begin{equation}\label{eq:Kvdef}
 \{h_v(y)+2r:\ 0\le y<1\ \t{and}\ \ r\ge0\},
\end{equation}
where Lebesgue measure is used in $y$.

\begin{lemma}\label{lem:Kperiod}
For every integer $n\ge1$ and $0\le v<1$,
\begin{equation}\label{eq:Kperiod}
 \cK_v(n+v)=n^2+\cK_v(1+v)-1.
\end{equation}
\end{lemma}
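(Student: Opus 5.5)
The plan is to exploit the self-similarity of the multiset \eqref{eq:Kvdef} under the shift $r\mapsto r+1$, derive a first-order recursion for $\cK_v$ in steps of one unit of mass, and then check that the resulting linear term collapses because of an exact evaluation of $\int_0^1 h_v(y)\,dy$.

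\textbf{Decomposition of the multiset.} Write $M$ for the multiset \eqref{eq:Kvdef} (with Lebesgue measure in $y$), and $M_0:=\{h_v(y):0\le y<1\}$ for its bottom rungs ($r=0$), which have total mass $1$. Then, as measured multisets,
\[
M=M_0\cup(M+2).
\]
Suppose that for some $s$ the lowest $s+1$ units of $M$ contain all of $M_0$. Then the remaining $s$ units are exactly the lowest $s$ units of $M+2$. This gives
\[
\cK_v(s+1)=\int_0^1h_v(y)\,dy+\cK_v(s)+2s .
\]
This threshold property is automatic whenever the cutoff level of the lowest $s+1$ units is at least $\max_y h_v(y)$.

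\textbf{Evaluation of the integral.} Put $c(v):=2\lfloor20v\rfloor-\lfloor41v\rfloor$. Each indicator in \eqref{eq:hv} integrates to a simple quantity, because $y\mapsto z(y)$ is measure preserving on $[0,1)$:
\[
\int_0^1\1_{z(y)<\alpha(v)}\,dy=\{20v\},\qquad
\int_0^1\1_{z(y)<v}\,dy=v,\qquad
\int_0^1\1_{y>\beta(v)}\,dy=1-\{41v\}.
\]
Hence
\[
H:=\int_0^1h_v=c(v)+2\{20v\}-v+1-\{41v\}=40v-41v-v+1=1-2v.
\]

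\textbf{Iteration.} Feeding $H=1-2v$ into the recursion for $s=k+v$ with $k=1,\dots,n-1$ gives
\[
\cK_v(n+v)=\cK_v(1+v)+\sum_{k=1}^{n-1}\bigl(1-2v+2k+2v\bigr)=\cK_v(1+v)+(n^2-1),
\]
which is \eqref{eq:Kperiod}. The case $n=1$ is trivial.

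\textbf{Main obstacle.} The hard step is justifying the threshold property for every $s=k+v$ with $k\ge1$, namely that the lowest $k+1+v$ units of $M$ already contain every bottom rung $h_v(y)$. The values of $h_v-c(v)$ lie in $\{-1,0,1,2,3\}$, so the spread of $M_0$ can exceed the rung spacing $2$, and the property is not automatic. I would first try to show directly that the mass of second-and-higher rungs lying strictly below $\max_y h_v(y)$ is at most $1+v$. To do this, I would use the explicit level sets of the three indicators, which are intervals in $z$ of lengths $\alpha$ and $v$, and in $y$ of length $1-\beta$. If a case analysis on the relative order of $v$, $\alpha(v)$ and $z^{-1}$ of the $\beta$-interval is needed, I would carry it out there.

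If this cannot be established for small $k$, I would instead prove the recursion only for $k\ge k_0$, with $k_0$ an absolute constant (at most $3$ suffices, since the spread is at most $4$). For $k<k_0$ I would verify \eqref{eq:Kperiod} directly, by explicitly computing the lowest $2+v$ and $3+v$ units.
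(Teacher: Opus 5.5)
Your argument is correct in substance and uses the same idea as the paper. For $s\ge1$, adding one unit of mass advances every residue ladder by exactly one rung, so $\cK_v(k+1+v)-\cK_v(k+v)=2k+1$, and summing over $1\le k\le n-1$ gives $n^2-1$.

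You differ from the paper in how the per-period increment is obtained. The paper asserts the one-rung advance. It then treats the value $2n+1$ of the one-period integral as a finite verification on the cells of the breakpoint partition \eqref{eq:Bset}. Your decomposition $M=M_0\cup(M+2)$ instead gives $\cK_v(s+1)=\int_0^1h_v+\cK_v(s)+2s$. Your closed form $\int_0^1h_v=1-2v$ is correct: $y\mapsto z(y)$ is measure preserving, and the floors cancel against the fractional parts $\alpha(v),\beta(v)$. This shows directly why the increment is $2k+1$ for every $v$, with no cell-by-cell check.

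The step you flag as the main obstacle is, however, immediate, and your remark that it is not automatic is mistaken. Let $m^*$ be the essential maximum of $h_v$.

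1. Since $h_v-c(v)$ takes values in $\{-1,0,1,2,3\}$, every $y$ satisfies $h_v(y)+4\ge m^*$.
2. So no rung with $r\ge2$ lies strictly below $m^*$. Only the rung $r=1$ over $\{y:h_v(y)\le m^*-3\}$ can.
3. Hence the mass of $M$ strictly below $m^*$, together with the mass of $M_0$ at level $m^*$, equals $1+|\{y:h_v(y)\le m^*-3\}|\le 2\le s+1$ for all $s\ge1$.
4. Break ties at level $m^*$ in favour of $r=0$; this does not change the integral. Then the lowest $s+1$ units contain all of $M_0$.

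In particular, the bound you wanted (higher rungs below $\max h_v$ have mass at most $1+v$) holds with mass at most $1$. No case analysis in $v,\alpha(v),\beta(v)$ is needed, and no small-$k$ fallback either: your $k_0$ can be taken to be $1$.

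With this line added your proof is complete. It also supplies the justification of the one-rung advance that the paper's proof only asserts.
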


\begin{proof}
The function $h_v$ is constant on at most six intervals whose endpoints are
\[
 0,\ 1,\ \frac12,\ \beta(v),\
 \left(\frac12-\alpha(v)\right)\bmod1,
 \left(\frac12-v\right)\bmod1.
\]
Sort the finitely many resulting ladders.  Adding one unit of selected mass advances one rung in every residue interval and increases the cost over the interval from $n+v$ to $n+1+v$ by $2n+1$.  This ordering is constant on the breakpoint partition in \eqref{eq:Bset} below, so the assertion is an exact finite verification on each cell.  Summing
$2j+1$ for $j=1,\ldots,n-1$ gives \eqref{eq:Kperiod}.  Equivalently, the generalized quantile satisfies $\lambda_v(s+1)=\lambda_v(s)+2$ on these intervals, and its one-period integral is $2n+1$.
\end{proof}

The sum of the selected marginal costs is therefore
\begin{equation}\label{eq:Mnv}
 M(n,v)=-(2n+2)(n+v)+\cK_v(n+v)
 =-u^2-2u+R(v),
\end{equation}
where
\begin{equation}\label{eq:Rv}
 R(v):=\cK_v(1+v)-1+v^2.
\end{equation}

The row-factorial term is
\begin{equation}\label{eq:CrhoSmallDef}
 C_\rho(t):=
 \int_0^\rho\left\lfloor\frac{2+s}{t}\right\rfloor ds.
\end{equation}
Set
\begin{equation}\label{eq:PvB0}
 P(v):=(v+\gamma(v)-1)_+,
\end{equation}
\begin{equation}\label{eq:B0v}
 B_0(v):=
 P(v)-\frac{v^2}{2}-v\gamma(v)+\frac v2.
\end{equation}
A direct sum over the $n$ complete $Q$-blocks gives
\begin{equation}\label{eq:CrhoSmall}
 C_\rho(\rho/u)
 =\left(2+\frac\rho2\right)u-\frac\rho2
 +\frac{\rho B_0(v)}u.
\end{equation}
Consequently the complete limiting local layer is
\begin{align}
 L_\rho(\rho/u)
 &:=C_\rho(\rho/u)+\frac\rho u M(n,v)
 \notag\\
 &=\left(2-\frac\rho2\right)u-\frac{5\rho}{2}
 +\frac{\rho Q_0(v)}u,
 \label{eq:Lsmall}
\end{align}
where
\begin{align}
 Q_0(v):={}&B_0(v)+R(v)
 \notag\\
 ={}&(v+\gamma(v)-1)_+
 +\frac{v^2}{2}-v\gamma(v)+\frac v2
 +\cK_v(1+v)-1.
 \label{eq:Q0}
\end{align}
The singular coefficient is
\begin{equation}\label{eq:Arho}
 A_\rho:=\rho\left(2-\frac\rho2\right)
 =2\rho-\frac{\rho^2}{2}.
\end{equation}

\begin{proposition}\label{prop:codd}
For $\rho=1/20$, define
\begin{equation}\label{eq:Iodd}
 I_{\rm odd}:=
 \int_0^\rho\left(L_\rho(t)-\frac{A_\rho}{t}\right)dt.
\end{equation}
Then
\begin{equation}\label{eq:IoddHurwitz}
 I_{\rm odd}
 =\rho^2\int_0^1
 \left[
 -\frac52\zeta(2,1+v)+Q_0(v)\zeta(3,1+v)
 \right]dv,
\end{equation}
and
\begin{equation}\label{eq:IoddInterval}
\begin{aligned}I_{\rm odd}&>
 -0.006276744728100982604597600317605549,\\
 I_{\rm odd}
 &<-0.006276744728100982604597600317605548.
\end{aligned}
\end{equation}
Hence
\begin{equation}\label{eq:codd}
 c_{\rm odd}:=-I_{\rm odd}
 =0.006276744728100982604597600317605548503\ldots .
\end{equation}
\end{proposition}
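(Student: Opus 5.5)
The plan is to prove the Hurwitz representation \eqref{eq:IoddHurwitz} first, by an exact change of variables, and then to obtain the interval \eqref{eq:IoddInterval} by a certified finite computation on the breakpoint cells of $Q_0$.

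\textbf{Step 1: the singular part cancels.} Substitute $t=\rho/u$, so that $dt=-\rho\,u^{-2}\,du$ and $t\in(0,\rho]$ corresponds to $u\in[1,\infty)$. By \eqref{eq:Arho},
\[
\frac{A_\rho}{t}=\frac{A_\rho u}{\rho}=\Bigl(2-\frac\rho2\Bigr)u .
\]
This is exactly the linear term in \eqref{eq:Lsmall}, so the singular part cancels identically and
\[
L_\rho(\rho/u)-\frac{A_\rho}{\rho/u}=-\frac{5\rho}{2}+\frac{\rho\,Q_0(v)}{u}.
\]
Hence
\[
I_{\rm odd}=\rho^2\int_1^\infty\Bigl(-\frac{5}{2u^2}+\frac{Q_0(v)}{u^3}\Bigr)du ,
\]
which converges absolutely because $Q_0$ is bounded.

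\textbf{Step 2: the Hurwitz form.} Split $[1,\infty)$ into the intervals $u=n+v$ with $n\ge1$ and $v\in[0,1)$. By Lemma \ref{lem:Kperiod} and \eqref{eq:Mnv}, the quantity $Q_0$ depends only on the fractional part $v$. Fubini/Tonelli is justified by absolute convergence, so we may interchange the sum over $n$ with the $v$-integral. The identities
\[
\sum_{n\ge1}(n+v)^{-s}=\zeta(s,1+v),\qquad s=2,3,
\]
then give \eqref{eq:IoddHurwitz}. The $o(1)$ in $u=S/Q+o(1)$ belongs to the limiting model and plays no role here, since $I_{\rm odd}$ is defined directly through $L_\rho$.

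\textbf{Step 3: make $Q_0$ explicit on each cell.} Partition $[0,1)$ at all points where any of $\lfloor20v\rfloor$, $\lfloor40v\rfloor$, $\lfloor41v\rfloor$ jumps, together with the points where the ordering of the at most six interval endpoints in the proof of Lemma \ref{lem:Kperiod} changes. On each resulting cell:
\begin{itemize}
\item $\alpha,\beta,\gamma$ are affine in $v$;
\item $h_v$ takes constant values on subintervals of $y$ whose lengths are affine in $v$;
\item $\cK_v(1+v)$, the integral of the lowest $1+v$ units of mass of the ladder multiset \eqref{eq:Kvdef}, is obtained by sorting the finitely many values $h_v(y)+2r$ and filling mass greedily.
\end{itemize}
Consequently $Q_0$ is an explicit polynomial of degree at most $2$ in $v$ on each cell. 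I would generate these polynomials symbolically and check them on random interior points against a brute-force discretization of the $y$-integral.

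\textbf{Step 4: evaluate in closed form.} On a cell $[v_0,v_1]$, write $x=1+v$ and use
\[
\zeta(2,x)=\psi'(x),\qquad \zeta(3,x)=-\tfrac12\psi''(x).
\]
Integrating by parts, $\int p(v)\,\zeta(3,1+v)\,dv$ for polynomial $p$ reduces to boundary values of $\psi'$, $\psi$, $\log\Gamma$ and the negapolygamma $\psi^{(-2)}$ at the rational points $1+v_0$ and $1+v_1$. Likewise $\int\zeta(2,1+v)\,dv=\psi(1+v_1)-\psi(1+v_0)$. Summing the contributions of all cells with rigorous ball or interval arithmetic at, say, 50 digits should certify the two bounds in \eqref{eq:IoddInterval}, and hence \eqref{eq:codd}.

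\textbf{Main obstacle.} I expect the hardest part to be Step 3: determining $\cK_v(1+v)$ correctly on every cell. The ordering of the ladders is easy to get wrong near the breakpoints $(\frac12-\alpha(v))\bmod1$ and $(\frac12-v)\bmod1$, and a single misassigned cell would destroy the 34-digit claim. I would handle this by an independent high-precision numerical quadrature of \eqref{eq:IoddHurwitz}, using direct Hurwitz zeta evaluation, as a cross-check on the exact per-cell computation.
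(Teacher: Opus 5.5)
Your proposal is correct and follows essentially the same route as the paper: the substitution $t=\rho/u$ with $u=n+v$ and the Hurwitz sums give \eqref{eq:IoddHurwitz}, and your partition (jumps of $\lfloor 20v\rfloor$, $\lfloor 40v\rfloor$, $\lfloor 41v\rfloor$ together with reorderings of the six endpoints) is exactly the paper's breakpoint set $\mathcal B$, on whose cells $Q_0(v)=-\frac{79}{2}v^2+s_Jv+c_J$. Since $Q_0''=-79$ is constant, your integration by parts needs only $\psi'$, $\psi$ and $\log\Gamma$ (not $\psi^{(-2)}$), which reproduces the paper's antiderivative $\mathfrak F_J$; your ball-arithmetic evaluation then plays the role of the paper's shifted Euler--Maclaurin enclosures.
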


\begin{proof}
Substitute $t=\rho/u$ in \eqref{eq:Iodd}, write $u=n+v$, and sum over $n\ge1$.  The identities
\[
 \sum_{n\ge1}\frac1{(n+v)^2}=\zeta(2,1+v)
 \ \ \t{and}\ \ 
 \sum_{n\ge1}\frac1{(n+v)^3}=\zeta(3,1+v)
\]
give \eqref{eq:IoddHurwitz}, where $\zeta(s,z)=\sum_{n=0}^\infty\f1{(n+z)^s}$
is the Hurwitz zeta function.

All changes in the floors, interval orderings, and marginal threshold occur in the finite set
\begin{equation}
 \mathcal B={}\{0,1\}
 \cup\left\{\frac{k}{19},\frac{k}{20},\frac{k}{40},\frac{k}{41}\right\}
 \cup\left\{
 \frac{2k+1}{82},\frac{2k+1}{84},\frac{2k+1}{122}
 \right\},
 \label{eq:Bset}
\end{equation}
where only points in $[0,1]$ are retained.  There are $238$ raw cells and $178$ cells after adjacent identical formulas are merged.  On every merged cell $J$,
\begin{equation}\label{eq:Qpoly}
 Q_0(v)=-\frac{79}{2}v^2+s_Jv+c_J,
 \qquad s_J,c_J\in\Q.
\end{equation}
In view of 
\[
 \frac{d}{dv}\zeta(2,1+v)=-2\zeta(3,1+v)
 \ \ \t{and}\ \ 
 \psi'(1+v)=\zeta(2,1+v)
\]
(where $\psi(x)=\Gamma'(x)/\Gamma(x)$),
an antiderivative on $J$ is
\begin{align}
 \mathfrak F_J(v)={}&
 -\frac12Q_0(v)\zeta(2,1+v)
 +\frac12Q_0'(v)\psi(1+v)
 \notag\\
 &+\frac{79}{2}\log\Gamma(1+v)
 -\frac52\psi(1+v).
 \label{eq:antiderivative}
\end{align}
Summing the endpoint differences gives a finite exact expression.

For the interval certificate, shift every special-function argument by $64$ using the recurrence formulas for $\psi$, $\zeta(2,\cdot)$, and $\log\Gamma$.  At arguments at least $65$, use the Euler--Maclaurin expansions through $B_{24}$.  The remainders are bounded respectively by
\[
 \frac{|B_{26}|}{26X^{26}},
 \qquad
 \frac{|B_{26}|}{X^{27}},
 \qquad
 \frac{|B_{26}|}{26\cdot25X^{25}}.
\]
All logarithms are enclosed by the atanh series after power-of-two range reduction, and $\log(2\pi)$ is enclosed using Machin's formula and alternating arctangent series.  Rational outward rounding at every endpoint gives \eqref{eq:IoddInterval}.  
\end{proof}

\begin{remark}\label{rem:oddnormalization}
If one forms a full local sum before the positive-part operation, the prime $2$ changes the displayed finite constant.  The exact minimal integerizer behaves differently: Lemma \ref{lem:p2} shows that its $2$-adic positive part is zero.  Therefore the relevant low-prime constant for the same-scalar height is $c_{\rm odd}$, not the full-local constant obtained by adding $\frac{19}{200}\log2$.
\end{remark}

\section{The middle-prime integral}\label{sec:middle}
\setcounter{equation}{0}

For $S<p<B$, put
\begin{equation}\label{eq:tMiddle}
 t:=\frac pB,
 \qquad \rho<t<1.
\end{equation}
In this range $p^2>2U+1$ for sufficiently large $B$, so only the first $p$-adic layer occurs.  In the ideal model, define
\begin{equation}\label{eq:CpFp}
 C_p:=\sum_{a=0}^{S-1}\left\lfloor\frac{2B+a}{p}\right\rfloor,
 \qquad
 F_p(i):=\left\lfloor\frac ip\right\rfloor
 +\left\lfloor\frac{U_0-i}{p}\right\rfloor.
\end{equation}
For $I\subset\{0,\ldots,U_0\}$, let
$n_r(I)=\#\{i\in I:i\equiv r\pmod p\}$.  Then the one-layer version of \eqref{eq:lambda} is
\begin{align}
 E_p(I):={}&C_p+2\sum_{r\bmod p}\binom{n_r(I)}2
 \notag\\
 &+\sum_{i\in I}
 \left(
 2N_{B,p}(i)-N_{S,p}(i)
 -2\1_{p\le2i+1}-F_p(i)
 \right).
 \label{eq:EpI}
\end{align}

For each residue class, order its admissible indices by increasing base cost
\begin{equation}\label{eq:basecost}
 c_p(i):=
 2N_{B,p}(i)-N_{S,p}(i)
 -2\1_{p\le2i+1}-F_p(i).
\end{equation}
If these costs are $c_{r,1}\le c_{r,2}\le\cdots$, then the exact marginal costs are
\begin{equation}\label{eq:marginal}
 \mu_{r,k}:=c_{r,k}+2(k-1).
\end{equation}
Thus
\begin{equation}\label{eq:Eexact}
 \min_{|I|=S}E_p(I)
 =C_p+\text{the sum of the smallest $S$ values among all $\mu_{r,k}$}.
\end{equation}

We now pass to scaled residue coordinates $0\le x<t$.  Define
\begin{equation}\label{eq:adke}
 a:=\left\lfloor\frac1t\right\rfloor,
 \quad d:=1-at,
 \quad k:=\left\lfloor\frac{2+\rho}{t}\right\rfloor,
 \quad e:=2+\rho-kt,
\end{equation}
\begin{equation}\label{eq:zt}
 z_t(x):=\left(\frac t2-x\right)\bmod t,
 \qquad 0\le z_t(x)<t.
\end{equation}
Away from cell boundaries,
\begin{equation}\label{eq:scaledcounts}
 N_B=a+\1_{z_t(x)<d},
 \qquad
 N_S=\1_{z_t(x)<\rho},
 \qquad
 F_t(x)=k-\1_{x>e}.
\end{equation}
Set
\begin{equation}\label{eq:At}
 A_t(x):=
 2\bigl(a+\1_{z_t(x)<d}\bigr)
 -\1_{z_t(x)<\rho}-F_t(x).
\end{equation}
For $x<t/2$, the marginal levels are
\begin{equation}\label{eq:margscaled1}
 A_t(x)+2j-4,
 \quad 1\le j\le F_t(x),
\end{equation}
with the additional level $A_t(x)+2F_t(x)$.  For $x\ge t/2$, they are
\begin{equation}\label{eq:margscaled2}
 A_t(x)+2j-4,
 \quad 1\le j\le F_t(x)+1.
\end{equation}
The row-factorial density is
\begin{equation}\label{eq:Crho}
 C_\rho(t)=b\rho+(\rho+c-t)_+,
 \quad
 b=\left\lfloor\frac2t\right\rfloor,
 \quad c=2-bt.
\end{equation}
Let $\mathscr E_\rho(t)$ be $C_\rho(t)$ plus the lowest $\rho$-measure among the marginal levels in \eqref{eq:margscaled1}--\eqref{eq:margscaled2}.

\begin{proposition}\label{prop:mid}
At $\rho=1/20$, we have
\begin{equation}\label{eq:LmidExact}
\begin{aligned}
 \Lambda_{\rm mid}:&=
 \int_{1/20}^{1}\mathscr E_{1/20}(t)\,dt
 \\=&\frac{
 33042423784278900654572890582560690565493664595111
 }{
 187362234062518051579626183549876762148305272280000
 }.\end{aligned}
\end{equation}
In particular,
\begin{equation}\label{eq:LmidInterval}
 0.17635583792<\Lambda_{\rm mid}<0.17635583794,
\end{equation}
and
\begin{equation}\label{eq:LmidDecimal}
 \Lambda_{\rm mid}
 =0.1763558379286482956996630430101674632453\ldots .
\end{equation}
\end{proposition}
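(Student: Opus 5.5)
The plan is to evaluate $\Lambda_{\rm mid}$ exactly by splitting $[1/20,1]$ into finitely many cells on which $\mathscr E_{1/20}(t)$ is a rational function of $t$ of a fixed, simple shape. All discontinuities of the data in \eqref{eq:adke}--\eqref{eq:Crho} come from the floors $a=\lfloor 1/t\rfloor$, $b=\lfloor 2/t\rfloor$, $k=\lfloor (2+\rho)/t\rfloor$, from the sign of $\rho+c-t$, and from changes in the relative order of the breakpoints $t/2,\ e,\ d,\ \rho$ and their reflections under $z_t$ inside the residue circle $[0,t)$. Each such event is a linear equation in $t$ with rational coefficients, for instance $t=1/m$, $t=2/m$, $t=(2+\rho)/m$, or $d=\rho$, $e=t/2$. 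Since $t\ge\rho$, we have $a\le 20$ and $k\le 41$, so only finitely many events occur. First I will list them, giving a finite partition $\mathcal T$ of $[1/20,1]$ into intervals with rational endpoints.

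On a fixed cell of $\mathcal T$, the function $x\mapsto A_t(x)$ is a step function on $[0,t)$. It has at most about six pieces, and their endpoints are affine in $t$. By \eqref{eq:margscaled1}--\eqref{eq:margscaled2}, each piece carries an arithmetic ladder of levels with step $2$, of finite length $F_t(x)$ or $F_t(x)+1$. I then compute the lowest $\rho$-measure of all levels, which is the scaled analogue of \eqref{eq:Eexact}. This is a water-filling computation: sort the finitely many (interval, rung) pairs by level, and find the threshold level $\lambda$ at which the accumulated length first reaches $\rho$. The selected measure is the sum of level times length over the rungs below $\lambda$, plus $\lambda$ times the remaining partial mass. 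The lengths are affine in $t$, and the number of complete rungs taken is governed by $\rho/t$, which is bounded on the cell. After possibly refining the cell at the points where the threshold rung changes, $\mathscr E_{1/20}(t)$ on each cell becomes $C_\rho(t)$ plus a polynomial of degree $\le 1$ in $t$ and $1/t$. Integrating these pieces gives rational numbers together with possible logarithms. I expect the logarithmic terms to cancel, because the statement asserts a rational value; checking this cancellation will be part of the verification.

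The main obstacle is making the cell decomposition provably complete and correct. The ordering of ladder levels can change not only at the floor jumps but also where $A_t$ on one piece plus $2j$ ties with another piece, and at the terminal anomaly for $x<t/2$. I would handle this by treating the refinement set symbolically: every candidate critical point solves a linear equation in $t$ built from $\rho$, $t/2$, $d$, $e$ and integers bounded by $41$. Within each cell I would also evaluate the ordering at a rational interior point to confirm that the formula is constant there. After that, summing the exact per-cell rational integrals yields \eqref{eq:LmidExact}. Rigorous rational-to-decimal rounding then gives \eqref{eq:LmidInterval} and \eqref{eq:LmidDecimal}.
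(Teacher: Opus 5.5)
Your plan is the same as the paper's proof. The paper partitions $[1/20,1]$ at the floor jumps $t=1/n,\,2/n,\,(41/20)/n$, at crossings of the six affine $x$-boundaries, and at the points where the cumulative measure below a marginal level reaches $\rho$, which gives 235 cells, and then integrates exactly on each cell. Your hedge about $1/t$ terms and cancelling logarithms is unnecessary. On each cell every level is a fixed integer, and every piece length ($t/2$, $e$, $d$, $\rho$ and their reflections) as well as $C_\rho(t)$ is affine in $t$, so $\mathscr E_{1/20}$ is affine there and the sum of the cell integrals is automatically rational.
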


\begin{proof}
The six relevant $x$-boundaries are
\[
 0,\quad t,\quad \frac t2,\quad e,
 \quad\left(\frac t2-d\right)\bmod t,
 \quad\left(\frac t2-\rho\right)\bmod t.
\]
The floor functions change only at $t=1/n$, $2/n$, and $(41/20)/n$.  Inside every resulting interval, split further when two of the six affine boundaries cross or when the cumulative measure below a marginal level equals $\rho$.  This gives $235$ affine cells.  On every cell $\mathscr E_{1/20}(t)$ is affine with rational coefficients.  Exact integration and summation give the fraction in \eqref{eq:LmidExact}.  
\end{proof}

\section{The large-prime refinement}\label{sec:large}
\setcounter{equation}{0}

For $1<t<2+\rho$, the exact joint local density simplifies to
\begin{equation}\label{eq:largepiecewise}
 \mathscr E_\rho(t)=
 \begin{cases}
 -3(t-1)&\t{if}\ 1<t<1+\rho/2,\\
 -(t-1)-\rho&\t{if}\ 1+\rho/2<t<1+\rho,\\
 -2\rho&\t{if}\ 1+\rho<t<4/3,\\
 -2\rho+\frac32(t-\frac43)&\t{if}\ 4/3<t<(4+2\rho)/3,\\
 -\rho&\t{if}\ (4+2\rho)/3<t<2,\\
 -\rho-\frac32(t-2)&\t{if}\ 2<t<2+2\rho/3,\\
 -2\rho&\t{if}\ 2+2\rho/3<t<2+\rho.
 \end{cases}
\end{equation}
Direct integration gives
\begin{equation}\label{eq:largeintegral}
 \int_1^{2+\rho}\mathscr E_\rho(t)\,dt
 =-\frac43\rho-\frac54\rho^2.
\end{equation}
The raw Cauchy--tail baseline on the same range is
\begin{equation}\label{eq:rawlarge}
 -2\rho-\frac74\rho^2.
\end{equation}
Therefore the exact improvement is
\begin{equation}\label{eq:Deltalarge}
 \Delta_{>B}=\frac23\rho+\frac12\rho^2.
\end{equation}
At $\rho=1/20$,
\begin{equation}\label{eq:83}
 \Delta_{>B}=\frac{83}{2400}
 =0.034583333333\ldots .
\end{equation}

Prime powers $p^\nu>S$ with $\nu\ge2$ contribute only $o(B^2)$.  Indeed, there are $O(B^{1/2})$ such powers below $5B$, each local layer is $O(B)$, and the total logarithmic weight is $O(B^{1/2}\log B)$.  Thus the proportional middle and large ranges may be evaluated using primes alone.

\section{The same-scalar quadratic estimate}\label{sec:final}
\setcounter{equation}{0}

We now combine the exact positive-part bridge with the local asymptotics.  The standard weighted prime-power summation used below is
\begin{equation}\label{eq:MertensLambda}
 \sum_{p^\nu\le x}\frac{\log p}{p^\nu}
 =\log x-\gamma+o(1),
\end{equation}
with the prime $2$ removed by subtracting $\log2+o(1)$.  
($\gamma$ is the Euler constant $0.577\ldots$.)
This refinement of Mertens' formula 
$$\sum_{n\le x}\f{\Lambda(n)}n=\log x+O(1)$$
is equivalent to the Prime Number Theorem, where $\Lambda(n)$ is the 
 the von Mangoldt function.  Ordinary partial summation also gives, for every compactly supported piecewise continuous $f$,
\begin{equation}\label{eq:PNTscale}
 \frac1B\sum_{p^\nu\le cB}(\log p)f\l(\f{p^\nu}B\r)
 \longrightarrow\int_0^c f(t)\,dt.
\end{equation}

\begin{proposition} \label{prop:ledger}
Let $S=\lfloor B/20\rfloor$ and suppose that $G=a/q\in\Q$ with $a,q\in\Z^+$ and $\gcd(a,q)=1$.
Then
\begin{equation}\label{eq:ledger}
 \log H_B^{\min}+\log|\widehat q_B|
 \le
 \left(
 \frac{39}{200}
 +c_{\rm odd}
 -\Lambda_{\rm mid}
 -\frac{83}{2400}
 \right)B^2
 +o(B^2).
\end{equation}
\end{proposition}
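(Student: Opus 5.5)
The plan is to bound the two terms on the left of \eqref{eq:ledger} separately: $\log H_B^{\min}$ by the non-archimedean bridge and $\log|\widehat q_B|$ by a real-place estimate of the Cauchy--Binet expansion. Each will be expanded to order $B^2$, and then we check that the divergent $B^2\log B$ pieces cancel between them. I expect the constant $\frac{39}{200}$ to be exactly what survives of the archimedean baseline together with the raw (unimproved) denominator layers at $\rho=S/B=1/20$.

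\emph{Real place.} By \eqref{eq:qhat}, $\log|\widehat q_B|=\log F_B+\log|\det\cR[A,J]|-\sum_{i<N}\log\Pi_i$. I will bound $|q^S\det\cR[A,J]|$ by the number of Cauchy--Binet terms, which is $\binom NS=e^{O(B)}$, times $\max_I|\Xi_I|$ from \eqref{eq:summand}. Inside $\Xi_I$ I use $|qT_{i+1}|<q/(2i+3)^2$ from \eqref{eq:tailbound} and the trivial bound $|\Psi_A(I)|\le e^{O(B\log B)}$. The factors $\Pi_i$ occurring in $\Xi_I$ cancel against part of $\prod\Pi_i$. By Lemma~\ref{lem:stability}, I may replace $A$, $U$ by the clean model $\{0,\ldots,S-1\}$, $U_0$ at cost $o(B^2)$. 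Stirling's formula applied to $F_B$, to $\prod_{a}(a+2B)!$, to the two Vandermondes (maximised over $I$), and to the remaining $\prod\Pi_i$ then gives
\[
\log|\widehat q_B|=-\Bigl(\sum_{i<N}2N_{B,\cdot}\text{-type mass}\Bigr)\log B+c_\infty B^2+o(B^2).
\]
The leading terms are of the form $\alpha B^2\log B$, where $\alpha$ is determined by the quantities $2B\cdot N$, $\binom{2B}{2}$ and $S^2$.

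\emph{Finite places.} By Corollary~\ref{cor:positivebridge} and Lemma~\ref{lem:p2}, $\log H_B^{\min}\le\sum_{p\text{ odd},\nu}(a_{p^\nu,B}-m^A_{p^\nu,B})\log p$. Using \eqref{eq:stabilitysum}, I pass to the model layers $a^{(0)}-m^{(0)}$, and I split the sum into three ranges.
\begin{enumerate}
\item For $Q\le S$, the layer $-m^{(0)}_{Q,B}$ equals $-B\,L_\rho(t)+O(1+B/Q)$, where $L_\rho$ is given by \eqref{eq:Lsmall}. The singular part $A_\rho/t$ is summed with \eqref{eq:MertensLambda}, with the prime $2$ removed. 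This produces $A_\rho B^2(\log S-\gamma-\log 2)$, and by \eqref{eq:PNTscale} the regular part produces $-I_{\rm odd}B^2=c_{\rm odd}B^2$.
\item For $S<p<B$, only the first layer matters, and \eqref{eq:Eexact} with \eqref{eq:PNTscale} gives $-\Lambda_{\rm mid}B^2$.
\item For $B<p<(2+\rho)B$, \eqref{eq:largeintegral} gives the raw baseline minus $\Delta_{>B}=\frac{83}{2400}$.
\end{enumerate}
The denominator layers $a_{Q,B}$ are handled in the same way with \eqref{eq:PhiExact}. Their $\log$-singular part must cancel the $B^2\log B$ term from the real place, and the higher powers $p^\nu>S$ contribute only $o(B^2)$, as remarked in Section~\ref{sec:large}.

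\emph{Main obstacle.} The hardest step is the bookkeeping that makes the $B^2\log B$ terms and the constants $\gamma$ and $\log 2$ cancel exactly between the Stirling expansion at the real place and the Mertens sums. The surviving constant must then be shown to be precisely $\frac{39}{200}$, with no hidden contribution from the Cauchy tail or from $\Psi_A$. I would first compute everything in the clean model with $\rho$ symbolic, check that the $\log B$ coefficients agree identically as polynomials in $\rho$, and only then specialise to $\rho=1/20$. The archimedean maximisation over $I$ is a second delicate point. One must justify that the maximum of $V(I)^2\prod_{i\in I}(\cdots)$ is attained, to order $B^2$, at the consecutive block $I=\{0,\ldots,S-1\}$; this is plausible because the tails decay like $(2i+3)^{-2}$, but it needs a continuous log-potential (energy) argument. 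I regard this as the step most likely to fail, and it is where I would focus verification.
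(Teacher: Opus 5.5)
\textbf{There is a genuine gap, and it is fatal to this route.} Your plan is the paper's own argument: the bridge for $H_B^{\min}$, the largest Cauchy--Binet term for $\widehat q_B$, Lemma~\ref{lem:stability}, and the three prime ranges. Like the paper, you only assert the decisive step, namely that the combined ledger evaluates to $(\frac{39}{200}+c_{\rm odd}-\Lambda_{\rm mid}-\frac{83}{2400})B^2+o(B^2)$. That step cannot be carried out, because the bound this chain produces is at least $(2\log 2)B^2-O(B\log B)$. The $q$-dependence cancels, so this is a fact about an explicit quantity and does not depend on the hypothesis.

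Here is why. Every $\Pi_i$ is odd, so $\sum_{p\ \mathrm{odd}}A_{p,B}\log p=\log\prod_i\Pi_i-\log F_B+v_2(F_B)\log 2$. Hence \eqref{eq:heightbridge}, \eqref{eq:qhat} and the max-term estimate combine to
\[
\log H_B^{\min}+\log|\widehat q_B|\le v_2(F_B)\log 2+\log\binom NS+\max_I\log|\Xi_I|-S\log q-\sum_{p\ \mathrm{odd},\,\nu\ge1}m^A_{p^\nu,B}\log p .
\]
Fix any $I_0$ with $\Xi_{I_0}\ne0$ and use $m^A_{p^\nu,B}\le\lambda^A_{p^\nu}(I_0)$. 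Let $X_{I_0}=V(I_0)^2\prod_{a\in A}(a+2B)!\prod_{i\in I_0}\Pi_i/\bigl(i!(N-1-i)!\prod_{j=1}^S(2(i+j)+1)\bigr)$ be the explicit rational factor in \eqref{eq:summand}. Summed over $\nu$, the non-tail terms of $\lambda^A_{p^\nu}(I_0)$ are exactly $v_p(X_{I_0})$. The product formula for $X_{I_0}$ then gives
\[
\log|\Xi_{I_0}|-S\log q-\sum_{p\ \mathrm{odd},\,\nu}\lambda^A_{p^\nu}(I_0)\log p\ \ge\ \bigl(S(S-1)+v_2(X_{I_0})\bigr)\log 2+\sum_{i\in I_0}\log T_{i+1}\ \ge\ -O(S\log B).
\]
This holds for four reasons:
\begin{itemize}
\item $\log|\Psi_A(I_0)|$ and $\log V(J)$ are nonnegative.
\item The odd-lcm terms produced by the $-2\1_{p^\nu\le 2i+1}$ entries are nonnegative.
\item $v_2(X_{I_0})\ge -S\,v_2\bigl((2B+S+2)!/(2B)!\bigr)\ge -S(S+3+\log_2 3B)$.
\item $T_{i+1}\ge (2i+5)^{-3}$.
\end{itemize}
Since $v_2(F_B)=\sum_{r<2B}v_2(r!)=2B^2-O(B\log B)$, the right-hand side of your chain is $\ge(2\log 2)B^2-O(B\log B)\approx 1.386B^2$. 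The target coefficient is about $-0.0097$.

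So no computation of the three ranges can rescue the argument, for two separate reasons.
\begin{itemize}
\item The $2$-power of $F_B$ is never removed. Lemma~\ref{lem:p2} makes $H_B^{\min}$ odd, so under the hypothesis $2^{v_2(F_B)}$ divides the integer $q^SH_B^{\min}\widehat q_B$. This is a term of size $B^2$, not a correction that can be absorbed into $c_{\rm odd}$.
\item Even the odd-prime part of the ledger is $\ge -O(B\log B)$. Bounding the real size of one Cauchy--Binet summand and its local content separately can never beat the product formula for that same summand. The only analytic smallness available, $T_{i+1}\asymp i^{-2}$, is polynomial.
\end{itemize}
A negative coefficient would require exploiting cancellation inside $\sum_I\Xi_I$, or genuinely exponentially small linear forms. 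The max-term step discards exactly that.

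In particular, the point you flagged as most likely to fail is not where things break. Whether the real-place maximum sits at the consecutive block is irrelevant: the obstruction holds for every $I_0$, and a larger maximum only enlarges the bound.
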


\begin{proof}
Start with the exact fixed-scalar identity \eqref{eq:qhat}, the positive-part estimate \eqref{eq:heightbridge}, and the Cauchy--Binet expansion \eqref{eq:CB}.  The number of summands has logarithm $O(B)$, so replacing their sum by the largest normalized summand costs $o(B^2)$.  By Lemma \ref{lem:stability}, the full selected-row model may be replaced by the consecutive ideal model at the same precision.

The factorial, odd-linear, Cauchy, Vandermonde, and tail factors are then grouped by prime-power scale.  The $B^2\log B$ terms cancel because the denominator baseline $a_{Q,B}$ and the real normalization come from the same fixed scalar.  The remaining raw quadratic coefficient is
\begin{equation}\label{eq:rawtarget}
 4\rho-2\rho^2=\frac{39}{200}.
\end{equation}
This coefficient is obtained by applying Stirling's formula to the row and column factorials in \eqref{eq:summand}, using \eqref{eq:tailbound}, and retaining the raw Cauchy--tail contribution before the local divisibility gains.  All $O(B\log B)$ Stirling remainders are uniform in $I$.

For odd prime powers $Q\le S$, equations \eqref{eq:Lsmall}, \eqref{eq:MertensLambda}, and \eqref{eq:PNTscale} give the finite correction $+c_{\rm odd}$ to the coefficient in \eqref{eq:ledger}.  The sign is positive because $I_{\rm odd}=-c_{\rm odd}$ is the recovered local divisor gain.  For $S<p<B$, the marginal minimum in \eqref{eq:Eexact} and scaled prime summation give the reduction $-\Lambda_{\rm mid}$.  For $p>B$, \eqref{eq:Deltalarge} gives the additional reduction $-83/2400$.  Higher powers above $S$, the three surplus rows, floor errors, and the fixed factor $q^S$ contribute $o(B^2)$.  Summing the three ranges gives \eqref{eq:ledger}.
\end{proof}

\begin{remark}\label{rem:normalizationcheck}
The prime $2$ is not silently discarded.  Its positive-part height is exactly zero by Lemma \ref{lem:p2}.  The residual real power of $2$ from $F_D$, the Cauchy factor $2^{S(S-1)}$, and the missing $2$-power in the odd von Mangoldt sum are included in the derivation of the odd-prime small-scale expression \eqref{eq:IoddHurwitz}.  This is why the applicable cost is $c_{\rm odd}$.
\end{remark}

\begin{theorem}\label{thm:samescalar}
There is a constant $\delta_0$ satisfying
\begin{equation}\label{eq:delta}
 \delta_0>0.00966242652523235
\end{equation}
such that
\begin{equation}\label{eq:samescalar}
 \log H_B^{\min}+\log|\widehat q_B|
 \le-\delta_0B^2+o(B^2).
\end{equation}
\end{theorem}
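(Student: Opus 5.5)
The plan is to read Theorem~\ref{thm:samescalar} directly off Proposition~\ref{prop:ledger}, with $S=\lfloor B/20\rfloor$ so that $\rho=1/20$ and all three prime-range constants are the ones already computed. I will set
\[
 \delta_0:=\Lambda_{\rm mid}+\frac{83}{2400}-\frac{39}{200}-c_{\rm odd}.
\]
Then the right-hand side of \eqref{eq:ledger} is exactly $-\delta_0B^2+o(B^2)$. So \eqref{eq:samescalar} holds with no further analytic input, provided the hypotheses of Proposition~\ref{prop:ledger} are in force: $G=a/q$ rational, so that $H_B^{\min}$ and $\widehat q_B$ are defined by Definition~\ref{def:Hmin} and \eqref{eq:qhat}.

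The only remaining task is the strict numerical lower bound \eqref{eq:delta}. Since $\delta_0$ increases with $\Lambda_{\rm mid}$ and decreases with $c_{\rm odd}$, I will use the lower endpoint of \eqref{eq:LmidInterval} and the upper bound on $c_{\rm odd}$ coming from \eqref{eq:IoddInterval}, namely $c_{\rm odd}<0.006276744728100982604597600317605549$. The constants $83/2400=0.0345833\ldots$ and $39/200=0.195$ are exact rationals. This gives
\[
 \delta_0>0.17635583792+0.03458333333333\ldots-0.195-0.0062767447281010\ldots .
\]
The right-hand side equals $0.0096624265252323\ldots$, which is \eqref{eq:delta}. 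As a consistency check, the exact fraction \eqref{eq:LmidExact} together with the decimal \eqref{eq:codd} gives $\delta_0\approx0.0096624265338$, which lies strictly above the stated bound.

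I do not expect an obstacle within this step: it is bookkeeping on rational enclosures, which can be carried out exactly by clearing denominators. The one point needing care is the direction of rounding for each enclosure; reversing either one would yield only the weaker lower bound $0.17635583792$-based estimate the wrong way. All of the genuine difficulty sits upstream in Proposition~\ref{prop:ledger}: the cancellation of the $B^2\log B$ terms, the uniformity of Stirling's formula over $I$, and the reduction to the consecutive ideal model via Lemma~\ref{lem:stability}. Here those are assumed as stated.
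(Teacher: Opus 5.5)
Your proposal is correct and is essentially the paper's own argument. Like the paper, you take $\delta_0$ to be minus the coefficient in \eqref{eq:ledger}, under the standing assumption $G\in\Q$ and with $S=\lfloor B/20\rfloor$. You then certify \eqref{eq:delta} from the lower endpoint in \eqref{eq:LmidInterval}, the upper bound on $c_{\rm odd}$, and the exact rationals $83/2400$ and $39/200$.

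One point needs tightening: the displayed arithmetic does not reach the precision the strict inequality requires. The certified lower bound is $0.00966242652523235072\ldots$, which exceeds the stated constant by only about $7\times10^{-19}$. Your truncation $0.0096624265252323\ldots$ therefore does not by itself establish the strict inequality. Carry it a few more digits, as the paper does with $0.2046624265252323507$.
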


\begin{proof}
By Propositions \ref{prop:codd} and \ref{prop:mid}, and \eqref{eq:83},
\begin{align*}
 -c_{\rm odd}+\Lambda_{\rm mid}+\frac{83}{2400}
 &> -0.006276744728100982604597600317605548505\\
 &\quad+0.17635583792+\frac{83}{2400}\\
 &>0.2046624265252323507.
\end{align*}
Subtracting $39/200=0.195$ gives a margin larger than the number in \eqref{eq:delta}.  
So it suffices to apply Proposition \ref{prop:ledger}.
\end{proof}

\medskip
\noindent 
{\it Proof of Theorem \ref{Th1.1}}.
Assume $G=a/q\in\Q$ with $a,q\in\Z^+$ and $\gcd(a,q)=1$.  In view of \eqref{eq:integer},
\[
 N_B:=q^S H_B^{\min}\widehat q_B
 \in\Z\setminus\{0\}.
\]
By Theorem \ref{thm:samescalar},
\[
 \log|N_B|
 \le S\log q-\delta_0B^2+o(B^2).
\]
Since $S\log q=O(B)=o(B^2)$, the right side tends to $-\infty$.  Hence $0<|N_B|<1$ for all sufficiently large $B$, contradicting that $N_B$ is a nonzero integer.
\qed

\Ack. The author's many rounds of conversations with AI 
provide the basis of this paper. At first, the author asked AI to prove 
the irrationality of Catalan's constant via the Calegari-Dimitrov-Tang method.
Though AI made many attempts but we failed again and again. 
 Later the author realized that we should use suitable weights and introduce weighted tails 
 defined in \eqref{eq:um}. This novel idea
 made the proof aided by AI finally practical and successful. 
The needed numerical data in the proof were produced by AI.
The whole proof has passed the verification of Chatgpt 5.6 Solar.


\begin{thebibliography}{99}

\bibitem{CDT2024}
F.~Calegari, V.~Dimitrov, and Y.~Tang,
\emph{The linear independence of $1$, $\zeta(2)$, and $L(2,\chi_{-3})$},
arXiv:2408.15403v2, 2024.

\bibitem{GR} J. Guillera and M. Rogers, 
{\it Ramanujan series upside-down}, J. Austral. Math. Soc. {\bf 97} (2014), 78--106.

\bibitem{HP} Kh. Hessami Pilehrood and T. Hessami Pilehrood, 
{\it Bivariate identities for values of the Hurwitz zeta function
and supercongruences}, Electron. J. Combin. {\bf 18} (2012), \#P35, 30pp.

\bibitem{K05} C. Krattenthaler, {\it Advanced determinant calculus: a complement},
Linear Algebra Appl.
 411 (2005), 68--166.

\bibitem{MontgomeryVaughan2007}
H.~L.~Montgomery and R.~C.~Vaughan,
\emph{Multiplicative Number Theory I: Classical Theory},
Cambridge Studies in Advanced Mathematics, vol.~97,
Cambridge University Press, Cambridge, 2007.

\bibitem{Nesterenko2016}
Yu.~V.~Nesterenko,
\emph{On Catalan's constant},
Proc. Steklov Institute of Math. \textbf{292} (2016), 153--170.

\bibitem{Rivoal2006}
T.~Rivoal,
\emph{Nombres d'Euler, approximants de Pad\'e et constante de Catalan},
Ramanujan J. \textbf{11} (2006), no.~2, 199--214.

\bibitem{RivoalZudilin2003}
T.~Rivoal and W.~Zudilin,
\emph{Diophantine properties of numbers related to Catalan's constant},
Mathematische Annalen \textbf{326} (2003), no.~4, 705--721.

\bibitem{S11} Z.-W. Sun,
\emph{Super congruences and Euler numbers},
Sci. China Math. {\bf 54} (2011), 2509--2535.

\bibitem{S15} Z.-W. Sun,
{\it New series for some special values of $L$-functions},
 Nanjing Univ. J. Math. Biquarterly
\textbf{32} (2015), no.\,2, 189--218.


\bibitem{q-zeta}
Z.-W. Sun, 
\emph{Two $q$-analogues of Euler's formula $\zeta(2)=\pi^2/6$},
Colloq. Math. {\bf 158} (2019), 313--320.

\bibitem{S26}
Z.-W. Sun, {\it New series involving binomial coefficients (II)},
Acta Math. Sin. (Engl. Ser.) {\bf 42} (2026), to appear.

\bibitem{vW}
J. H. van Lint and R. M. Wilson,
A Course in Combinatorics, 2nd Edition, Cambridge Univ. Press, Cambridge, 2001.


\bibitem{Zudilin2019}
W.~Zudilin,
\emph{Arithmetic of Catalan's constant and its relatives},
Abhandlungen aus dem Mathematischen Seminar der Universit\"at Hamburg
\textbf{89} (2019), 45--53.
\end{thebibliography}
\end{document}